\newtheorem{theorem}{Theorem}[section]
\newtheorem{prop}[theorem]{Proposition}
\newtheorem{lem}[theorem]{Lemma}
\newtheorem{rem}[theorem]{Remark}
\newtheorem{ex}[theorem]{Example}
\numberwithin{equation}{section}
\renewcommand{\o}{\omega}
\newcommand{\N}{\mathbb{N}}
\renewcommand{\leq}{\leqslant}
\renewcommand{\geq}{\geqslant}
\renewcommand{\pmod}[1]{\allowbreak\mkern7mu({\operator@font mod}\,\,#1)}
\newcommand{\be}{\begin{equation}}
\newcommand{\ee}{\end{equation}}
\renewcommand{\le}{\leqslant}
\title{Tree asymptotic densities in number theory}
\author{Roberto Conti}
\address{Department of Basic and Applied Sciences for Engineering,
Sapienza University of Rome, Italy}
\email{roberto.conti@sbai.uniroma1.it}
\author{Pierluigi Contucci}
\address{Department of Mathematics, University of Bologna, Italy}
\email{pierluigi.contucci@unibo.it}
\author{Vitalii Iudelevich}
\address{Faculty of Computer Science, National Research University
``Higher School of Economics'', Pokrovsky blvd.\ 11, Moscow 109028,
Russia}
\email{vitaliiiudelevich@gmail.com}
\date{\today}
\begin{document}

\begin{abstract}
We study the asymptotic distribution of integers sharing the same
rooted-tree structure that encodes their complete prime factorization tower. For each tree we derive an explicit density formula depending only on a pair $(m,k)$, the density signature of the tree, up to a suitable multiplicative scalar factor and introduce the corresponding tree zeta function, which generalizes the prime zeta function. Classical results such as the prime number theorem and later work by Landau 
appear as special cases.
\end{abstract}

\maketitle


\section{Introduction}
The Prime Number Theorem has a long and interesting history,
 going back to work of Legendre, Gauss, Riemann, Hadamard and de la Vall\'ee Poussin.
It is beyond any doubt one of the most fascinating results in number theory, and as such it continues to inspire ideas and results ever since. 
It provides the asymptotic distribution of the primes, 
i.e. the asymptotic behaviour of the prime-counting function 
$\pi(x) = \#\{ p \leq x \ | \ p \ \mbox{prime}\}$ for $x \to +\infty$. 
More precisely, it states that
$$
\pi(x) \sim \frac{x}{\log x}, \quad x \to +\infty \ .
$$

A natural generalization was then provided by
Landau, dealing with numbers that can be written as products of $k$ distinct primes, for any fixed $k > 1$. 
Define $\omega(n)$ to be the number of distinct prime divisors of $n$. Then (see \cite{Lan})
$$\#\left\{n\leqslant x: n\ \text{is square-free and}\ \omega(n) = k \right\} \sim \dfrac{x (\log\log x)^{k-1}}{(k-1)! \log x}$$
for any fixed $k\geqslant 2$ and $x\to +\infty$.
(There are variants of this result, e.g. for the number of the integers $n\leqslant x$ with $\omega(n) = k$, that actually display the same asymptotic behaviour.)

Quite recently, Naslund studied the asymptotics of the number of integers with a ``predetermined prime factorization'' \cite{Nas}. Namely, given $(\alpha_1,\alpha_2,\ldots,\alpha_k) \in {\mathbb N}^k$, he described the asymptotics of the number of integers from 1 up to $x$ of the form 
$$n = p_1^{\alpha_1} p_2^{\alpha_2} \cdots p_k^{\alpha_k} \ , $$ where the $p_i$ are (not necessarily) distinct.
For instance, he showed that 
$$\#\{n \leq x \ : \ n = p q^3, p, q \ \text{primes}, \ p \neq q\} \sim \frac x {\log x} P(3) \ , \ x \to + \infty$$
as well as
$$\#\{n \leq x \ : \ n = p_1 p_2 p_3^3 p_4^5 p_5^{19}, p_i \ \text{distinct primes}\} \sim \frac {x \log\log x} {\log x} P(3)P(5)P(19) \ , \ x \to + \infty \ , $$
where $P(s) = \sum_p p^{-s}$ is the so-called {\it prime zeta function} (see e.g. \cite{Fr,Tit}).

In the present paper we push further this line of thought and consider the (non-planar) rooted tree structure of natural numbers (which is the graphical encoding of the so-called
prime factorization tower and can be inferred from that) 
as recently discussed in \cite{CC25,Iu22,Iu25,CCI24}.
In other words, we replace the above ``predetermined prime factorization'' with the
``predetermined prime factorization tower''.
Indeed, our main result is a formula for the asymptotic distribution of integers with a given
rooted tree structure.

\medskip
Hereafter we describe in more detail our result, along with the necessary concepts. 
To any natural number $n$ one can associate in unique way a nonplanar rooted tree $t(n)$ describing iteratively the prime number factorization of $n$ and then the factorization of all the integers appearing as exponents in this factorization, and then the factorization of all the integers appearing as exponents in the factorization of the exponents, and so on and so fourth.
The writing 
$$n = \prod_{i=1}^{h} p_i^{\prod_{j=1}^{h_i} p_{i,j}^{\prod_{k=1}^{h_{i,j}} p_{i,j,k}^{(\prod\cdots)}}}$$
is sometimes called the prime tower factorization of $n$ (see \cite{DKV} and \cite{DG}).
The tree $t(n)$ can be read off from this tower.
More concretely, let
$p_1^{\alpha_1} \cdots p_k^{\alpha_k}$ be the prime factorization of $n$. 
Correspondingly, we draw
$k$ edges emanating from the root. 
Then consider the prime factorization of $\alpha_1$, say
$q_1^{\beta_1} \cdots q_h^{\beta_h}$.
We then draw $h$ edges emanating from the end of the first edge attached to the root, and repeat this construction both horizontally (i.e. factorizing the other exponents $\alpha_2, \ldots, \alpha_k$, and drawing edges emanating from the corresponding ends) and vertically
(i.e. factorizing $\beta_1, \ldots, \beta_h$ and drawing edges on the top of the previous ends, etc.),
climbing the ladder of exponents, until there is nothing more to factorize. 
Thus, for example, 

\begin{equation}
t(16)=t(2^{2^{2}})= \; 
\begin{tikzpicture}[baseline={(r.base)},scale=0.8,level distance=8mm]
\tikzset{
  root/.style={circle,draw,fill=black,inner sep=1pt},
  hollow/.style={circle,draw,fill=white,inner sep=1pt}
}
\node[root] (r) {}
  child[grow=90]{ node[hollow] (a) {}
    child[grow=90]{ node[hollow] (b) {}
      child[grow=90]{ node[hollow] {} } }};
\end{tikzpicture}
\end{equation}
\begin{equation}
t(300)=t(2^{2}\cdot3\cdot5^{2})=
\begin{tikzpicture}[baseline={(root.south)},scale=0.8]
  \tikzset{
    root/.style={circle,draw,fill=black,inner sep=1pt},
    hollow/.style={circle,draw,fill=white,inner sep=1pt}
  }

  \node[root] (root) at (0,0) {};

  \node[hollow] (L1) at (-0.9,0.9) {}; 
  \node[hollow] (M1) at ( 0.0,0.9) {}; 
  \node[hollow] (R1) at ( 0.9,0.9) {}; 

  \draw (root)--(L1);
  \draw (root)--(M1);
  \draw (root)--(R1);

  \node[hollow] (L2) at (-0.9,1.8) {};
  \draw (L1)--(L2);

  \node[hollow] (R2) at ( 0.9,1.8) {};
  \draw (R1)--(R2);
\end{tikzpicture}
\end{equation}

\begin{equation}
\label{eq:t4800}
t(4\,800)=t(2^{6}\cdot 3\cdot 5^{2})
= t(2^{2\cdot 3}\cdot 3\cdot 5^{2})=
\begin{tikzpicture}[scale=0.85, baseline={(root.south)}]
  \tikzset{
    root/.style={circle, fill=black, draw=black, inner sep=0, minimum size=3pt},
    v/.style={circle, draw=black, fill=white, inner sep=0, minimum size=3pt}
  }

  \node[root] (root) at (0,0) {};

  \node[v] (L1) at (-0.80,0.90) {};
  \node[v] (M1) at ( 0.00,0.90) {};
  \node[v] (R1) at ( 0.80,0.90) {};

  \node[v] (L2a) at (-1.15,1.80) {};
  \node[v] (L2b) at (-0.45,1.80) {};
  \node[v] (R2)  at ( 0.80,1.80) {};

  \draw (root)--(L1);
  \draw (root)--(M1);
  \draw (root)--(R1);

  \draw (L1)--(L2a);
  \draw (L1)--(L2b);

  \draw (R1)--(R2);
\end{tikzpicture}
\end{equation}

\begin{equation}
\label{eq:t307200}
t(307\,200)=t(2^{12}\cdot3\cdot5^{2})
= t(2^{2^{2}\cdot3}\cdot3\cdot5^{2})=
\begin{tikzpicture}[scale=0.85, baseline={(root.south)}]
  \tikzset{
    root/.style={circle, fill=black, draw=black, inner sep=0, minimum size=3pt},
    v/.style={circle, draw=black, fill=white, inner sep=0, minimum size=3pt}
  }

  \node[root] (root) at (0,0) {};

  \node[v] (L1) at (-0.8,0.9) {};
  \node[v] (M1) at ( 0.0,0.9) {};
  \node[v] (R1) at ( 0.8,0.9) {};
  \draw (root)--(L1);
  \draw (root)--(M1);
  \draw (root)--(R1);

  \node[v] (L2a) at (-1.1,1.8) {}; 
  \node[v] (L2b) at (-0.5,1.8) {}; 
  \draw (L1)--(L2a);
  \draw (L1)--(L2b);

  \node[v] (L3) at (-1.1,2.7) {};
  \draw (L2a)--(L3);

  \node[v] (R2) at (0.8,1.8) {};
  \draw (R1)--(R2);
\end{tikzpicture}
\end{equation}

\begin{equation}
\label{eq:t18662400}
t(18\,662\,400)=t(2^{10}\cdot3^{6}\cdot5^{2})
= t(2^{2\cdot5}\cdot3^{2\cdot3}\cdot5^{2})=
\begin{tikzpicture}[scale=0.85, baseline={(root.south)}]
  \tikzset{
    root/.style={circle, fill=black, draw=black, inner sep=0, minimum size=3pt},
    v/.style={circle, draw=black, fill=white, inner sep=0, minimum size=3pt}
  }

  \node[root] (root) at (0,0) {};

  \node[v] (L1) at (-1.00,0.90) {}; 
  \node[v] (M1) at ( 0.00,0.90) {}; 
  \node[v] (R1) at ( 1.00,0.90) {}; 

  \draw (root)--(L1);
  \draw (root)--(M1);
  \draw (root)--(R1);

  \node[v] (L2a) at (-1.35,1.80) {}; 
  \node[v] (L2b) at (-0.65,1.80) {}; 
  \draw (L1)--(L2a);
  \draw (L1)--(L2b);

  \node[v] (M2a) at (-0.35,1.80) {};
  \node[v] (M2b) at ( 0.35,1.80) {};
  \draw (M1)--(M2a);
  \draw (M1)--(M2b);

  \node[v] (R2) at ( 1.00,1.80) {};
  \draw (R1)--(R2);
\end{tikzpicture}
\end{equation}

\begin{equation}
\label{eq:t192000000}
t(192\,000\,000)=t(2^{12}\cdot 3 \cdot 5^{6})
= t(2^{2^{2}\cdot 3}\cdot 3 \cdot 5^{2\cdot 3})=
\begin{tikzpicture}[scale=0.85, baseline={(root.south)}]
  \tikzset{
    root/.style={circle, fill=black, draw=black, inner sep=0, minimum size=3pt},
    v/.style={circle, draw=black, fill=white, inner sep=0, minimum size=3pt}
  }

  \node[root] (root) at (0,0) {};

  \node[v] (L1) at (-1.00,0.90) {}; 
  \node[v] (M1) at ( 0.00,0.90) {}; 
  \node[v] (R1) at ( 1.00,0.90) {}; 

  \draw (root)--(L1);
  \draw (root)--(M1);
  \draw (root)--(R1);

  \node[v] (L2a) at (-1.35,1.80) {}; 
  \node[v] (L2b) at (-0.65,1.80) {}; 
  \draw (L1)--(L2a);
  \draw (L1)--(L2b);

  \node[v] (L3)  at (-1.35,2.70) {};
  \draw (L2a)--(L3);


  \node[v] (R2a) at ( 0.65,1.80) {}; 
  \node[v] (R2b) at ( 1.35,1.80) {}; 
  \draw (R1)--(R2a);
  \draw (R1)--(R2b);
\end{tikzpicture}
\end{equation}

\begin{equation}
\label{eq:t729000000}
t(729\,000\,000)=t(2^{6}\cdot 3^{6}\cdot 5^{6})
= t(2^{2\cdot 3}\cdot 3^{2\cdot 3}\cdot 5^{2\cdot 3})=
\begin{tikzpicture}[scale=0.85, baseline={(root.south)}]
  \tikzset{
    root/.style={circle, fill=black, draw=black, inner sep=0, minimum size=3pt},
    v/.style={circle, draw=black, fill=white, inner sep=0, minimum size=3pt}
  }

  \node[root] (root) at (0,0) {};

  \node[v] (L1) at (-1.00,0.90) {}; 
  \node[v] (M1) at ( 0.00,0.90) {}; 
  \node[v] (R1) at ( 1.00,0.90) {}; 

  \draw (root)--(L1);
  \draw (root)--(M1);
  \draw (root)--(R1);

  \node[v] (L2a) at (-1.35,1.80) {};
  \node[v] (L2b) at (-0.65,1.80) {};
  \draw (L1)--(L2a);
  \draw (L1)--(L2b);

  \node[v] (M2a) at (-0.35,1.80) {};
  \node[v] (M2b) at ( 0.35,1.80) {};
  \draw (M1)--(M2a);
  \draw (M1)--(M2b);

  \node[v] (R2a) at ( 0.65,1.80) {};
  \node[v] (R2b) at ( 1.35,1.80) {};
  \draw (R1)--(R2a);
  \draw (R1)--(R2b);
\end{tikzpicture}
\end{equation}

A list of the (planar version of the) trees $t(n)$ for $n$ ranging from 1 to 102, that perhaps can illustrate what is going on much better than any other abstract explanation, can be found at the end of the paper \cite{CC25}. A very relevant feature for each tree is its \textit{date of birth}, i.e. the first integer associated with it.

\medskip

\noindent
The main result of this work, in simple terms, is the computation of the asymptotic density of any given rooted tree in the natural sequence. We show that such a density depends up to a constant factor (which depends on the tree) only on the \textit{density
signature} of the tree, i.e. a couple of integers $(m,k)$ that is determined by climbing up to the first level (height $1$) the branching from the root. 
We identify among the 
subtrees starting at level~1, the one 
(the ``oldest subtree'') 
having the earliest \textit{date of birth}.
We call $m$ that integer.  We then count how many leaves 
of the original tree contain precisely this specific subtree starting at level~1 and nothing more;
this gives our $k$. For instance, the couples $(m,k)$ for the $6$ previous examples are: 

\[
\begin{array}{c|c}
t(n) & (m,k) \\ \hline
t(16)        & (4,1) \\
t(300)       & (1,1) \\
t(4\,800)      & (1,1) \\
t(307\,200)    & (1,1) \\
t(18\,662\,400)  & (2,1) \\
t(192\,000\,000) & (1,1) \\
t(729\,000\,000) & (6,3)
\end{array}
\]

With these definitions at hand, we can now claim that
the density of a given tree $T$ of signature $(m,k)$
turns out to be
\begin{equation}
m\,\frac{x^{1/m}}{\log x}\,
\frac{(\log\log x)^{k-1}}{(k-1)!}\,
c_T,
\qquad x\to+\infty,
\end{equation}
for a suitable constant $c_T$ that will be carefully identified as the evaluation at $1/m$ of a certain tree zeta function. 
See Theorem \ref{main1} in the main text for the precise statement.

\medskip
The \textit{tree zeta functions}, that will be introduced below (see formula (\ref{zetadef})), are a natural family of functions of one complex variable associated to rooted trees, including as a special case the prime zeta function. 
Along the way, we will also establish some analytic properties of the tree zeta functions. Remarkably, we show that the tree zeta function of a tree as above has an essential singularity at $1/m$ and we describe the corresponding asymptotic behavior in Theorem \ref{main2}.

\medskip
Let us close this introduction by mentioning few special cases of Theorem \ref{main1}.
Of course, the prime number theorem and the subsequent generalization by Landau correspond to the cases $m=1=k$ and $m=1$, $k >1$, respectively. In both these cases, the constant factor is 1 and we recover the aforementioned classical results.
(See example \ref{examples}, (i) for details.)
For a bridge between Naslund work and the present one, 
see example \ref{examples}, (ii).

\medskip

Although for the time being our main result looks quite satisfactory, we feel that it should be possible to improve it to get an estimate that is uniform in $m$ and $k$. 
We plan to come back to this point in the near future.

\medskip
Concerning the notation, in this paper $p,q$ (as well as $p_1, p_2, \ldots, q_1, q_2, \ldots$) always denote prime numbers,
so for instance $\sum_p$, $\sum_{p,q}$, $\prod_p$ should be understood accordingly. Also, $\mathbb{I}_A$ denotes the characteristic function of the set $A$.

	\section{The main Result}
    All the trees in this paper will be rooted and non-planar.
	For two trees $T_1$ and $T_2$, define their product $T_1 \circ T_2 (= T_2 \circ T_1)$ to be the tree obtained as the result of gluing $T_1$ and $T_2$ together at their roots. By $r$ we mean the tree with only one vertex (i.e., the root) and no edges; this is the unit for the product $\circ$, i.e. $T \circ r = r \circ T = T$ for all trees $T$.
    Given a tree $T$, we may consider a new tree $e^T$, where the root of $T$ is attached to the leaf of a tree with a single edge $t(2)$ (the ``prime tree'') and the root of $e^T$ coincides with the root of $t(2)$.

    Given a tree $T$, define the value $\mathfrak{M}(T)$ as the smallest natural number associated with it, 
    namely
	$$\mathfrak{M}(T) = \min\left\{m \in \mathbb{N}: t(m) = T\right\}\!.$$
Therefore, $\mathfrak M$ is a function from the set of all trees ${\mathcal T}$ to $\mathbb N$, with the following properties: 
\begin{itemize}
\item It is injective, and thus it induces a total order on $\mathcal T$: we say that $T < T'$ if ${\mathfrak M}(T) < {\mathfrak M}(T')$. Hence we may define 
$t_k$ to be the $k$-th tree ($t_1 = r$) w.r.t. this order.
\item The image of $\mathfrak M$ is given by
$$
\{1,2,4,6,12,16,30,36,48,60,64,90,144,\ldots\}$$
We observe that these numbers form a strictly increasing sequence $(a_k)_{k=1}^\infty$ that has been defined in \cite[Equation (11)]{CC25}.
\item Of course, $t_k = t(a_k)$.
\end{itemize}

    So, for instance, 
$t(1) = r = t_1$, $t(2)=e^r = t_2 =t(3) \neq t_3$.

	We also set $\mathfrak{M}(\emptyset) = +\infty$,  and $e^\emptyset = r$. 
    
    Finally, for each tree $T$ we define the {\it tree zeta function} $\zeta_T$ by 
	\begin{equation} \label{zetadef}
    \zeta_T(s) = \sum\limits_{\substack{n=1 \\ t(n) = T}}^{+\infty}n^{-s},\ \Re s > 1
    \end{equation}
    (so that $\zeta_{t(1)} = 1$ and $\zeta_{t(2)} = P$, i.e. the zeta function of the prime tree is the prime zeta function).
        In passing, we observe that one can recover the Riemann zeta function by summing the tree zeta-functions over all trees, namely
    $$\zeta(s) = \sum_T \zeta_T(s) \ . $$ 

	For $x \in {\mathbb R}$, $x \geq 1$, we also set 
	$$\pi_T(x) = \#\left\{n\leqslant x: t(n) = T\right\}\!.$$
    For instance, $\pi_{t(2)} $  coincides with the prime-counting function $\pi$.
    Providing suitable asymptotic estimates for $x \to +\infty$ for such tree-counting functions $\pi_T$ will be the main focus of this paper. 
    Notice that $\sum_{T \in {\mathcal T}(x)} \pi_T(x) = \lfloor x \rfloor$ for all $x \geq 1$, where ${\mathcal T}(x) = \{T \in {\mathcal T} \ | \ {\mathfrak M}(T) \leq x\}$ is the subset of trees that appear in the window up to $x$.

We start showing that any nontrivial tree $\mathfrak{T}$ can be uniquely written as the product of two trees $T \circ T'$ in a way that is suitable for our later purposes. The easy proof is omitted.
    \begin{lem}
    For any given tree $\mathfrak{T} \in \mathcal{T}$
     different from $r$
    there exist unique trees $T_0, T'$ and integers $k\geqslant 1, s\geqslant 0$ such that $$\mathfrak{T} = T\circ T',$$
    where
    $$T = \underbrace{ {e^{T_0}\circ e^{T_0}\circ \cdots \circ e^{T_0}}}_{k\ \textup{times}},\quad T' = \begin{cases} t(1), & s=0, \\ e^{T_1}\circ e^{T_2}\circ \cdots \circ e^{T_s}, & s >0, \end{cases} $$
    and, in the latter case, for each $1\leqslant j\leqslant s$, we have
		$\mathfrak{M}(T_0)<\mathfrak{M}(T_j) 
    < + \infty$.
\end{lem}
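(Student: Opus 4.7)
The plan is to read off the decomposition directly from the branching at the root. Since $\mathfrak{T}\neq r$, there is at least one edge emanating from the root. For each such edge, let $S_i$ denote the (possibly trivial) subtree hanging above its upper endpoint; then that branch, together with the edge itself, is exactly $e^{S_i}$. By the very definition of the product $\circ$, the tree $\mathfrak{T}$ equals $e^{S_1}\circ e^{S_2}\circ\cdots\circ e^{S_n}$, and because the tree is non-planar the multiset $\{\!\{S_1,\ldots,S_n\}\!\}$ of level-$1$ subtrees is an invariant of $\mathfrak{T}$.

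Next, among the distinct elements appearing in this multiset, I apply the fact recorded earlier that $\mathfrak{M}$ is injective on $\mathcal{T}$: there is a unique element $T_0$ minimising $\mathfrak{M}$, and every other distinct element $S$ satisfies $\mathfrak{M}(T_0)<\mathfrak{M}(S)<+\infty$. Letting $k\geq 1$ be the multiplicity of $T_0$ in the multiset, and listing the remaining entries (with multiplicity) as $T_1,\ldots,T_s$ (with $s\geq 0$), I set
\[
T = \underbrace{e^{T_0}\circ\cdots\circ e^{T_0}}_{k\ \text{times}},
\qquad
T' = \begin{cases} t(1), & s=0,\\ e^{T_1}\circ\cdots\circ e^{T_s}, & s>0, \end{cases}
\]
so that $\mathfrak{T}=T\circ T'$ by construction, and $\mathfrak{M}(T_0)<\mathfrak{M}(T_j)<+\infty$ for all $1\leq j\leq s$.

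For uniqueness, suppose a second decomposition $\mathfrak{T}=\widetilde{T}\circ\widetilde{T}'$ with $\widetilde{T}=(e^{\widetilde{T}_0})^{\circ\widetilde{k}}$ and $\widetilde{T}'=e^{\widetilde{T}_1}\circ\cdots\circ e^{\widetilde{T}_{\widetilde{s}}}$ satisfies the hypotheses. Expanding both products reconstructs the same multiset of level-$1$ subtrees of $\mathfrak{T}$, namely $\{\!\{S_1,\ldots,S_n\}\!\}$. The condition $\mathfrak{M}(\widetilde{T}_0)<\mathfrak{M}(\widetilde{T}_j)$ forces $\widetilde{T}_0$ to be the $\mathfrak{M}$-minimal distinct element of that multiset, and $\widetilde{k}$ to be its multiplicity; so $\widetilde{T}_0=T_0$ and $\widetilde{k}=k$. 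The remaining entries (as a multiset) then coincide with $\{\!\{T_1,\ldots,T_s\}\!\}$, whence $\widetilde{T}'=T'$ (since $\circ$ is commutative on non-planar rooted trees).

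The only thing that needs care is the very first step — the fact that a non-planar rooted tree is canonically the $\circ$-product of its root-branches, each of the shape $e^S$ — but this is essentially the definition of $\circ$ and of $e^{(\cdot)}$, so no serious obstacle arises; the rest is bookkeeping driven by the injectivity of $\mathfrak{M}$.
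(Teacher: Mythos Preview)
Your argument is correct and is exactly the natural one: decompose $\mathfrak{T}$ into its root-branches $e^{S_i}$, take $T_0$ to be the $\mathfrak{M}$-minimal element of the resulting multiset with multiplicity $k$, and bundle the rest into $T'$; uniqueness follows since any admissible decomposition recovers that same multiset. The paper itself omits the proof (``The easy proof is omitted''), so there is nothing further to compare against.
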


\medskip
We can also refine the information about the region of convergence of the series defining the tree zeta-functions in the following way.
\begin{lem}
Let ${\mathfrak{T}} = e^{T_0} \circ \cdots \circ e^{T_0} \circ e^{T_1} \circ \cdots \circ e^{T_s}$
($\ell$ factors in total), with $m := {\mathfrak M}(T_0) < {\mathfrak M}(T_j) < + \infty$ for all $1 \leq j \leq s$.
Then the series 
$$\zeta_{\mathfrak T}(s) = \sum\limits_{\substack{n=1 \\ t(n) = {\mathfrak T}}}^{+\infty}n^{-s}$$
is absolutely convergent for $\Re s > \frac 1 m$.
\end{lem}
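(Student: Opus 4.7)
The strategy is to reduce absolute convergence of $\zeta_{\mathfrak{T}}$ to convergence of the prime zeta function $P$ at the point $ms$, by combining the structural description of integers $n$ with $t(n) = \mathfrak{T}$ with the defining property of $\mathfrak{M}$.

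First, I would unpack what $t(n) = \mathfrak{T}$ means in terms of the prime factorization of $n$. Since $\mathfrak{T}$ has exactly $\ell$ edges emanating from the root, $n$ must have exactly $\ell$ distinct prime factors; write $n = q_1^{b_1} \cdots q_\ell^{b_\ell}$ with $q_1 < \cdots < q_\ell$. The subtrees of $\mathfrak{T}$ attached to the root are $e^{T_0}$ (with multiplicity $k$) and $e^{T_j}$ for $j = 1, \ldots, s$, so after a suitable permutation of indices, $t(b_i) = T_0$ for $k$ of the $i$'s and $t(b_i) = T_j$ for the remaining $s$ indices. The crucial observation is then that \emph{every} exponent is at least $m$: if $t(b) = T_0$ then $b \geq \mathfrak{M}(T_0) = m$, and if $t(b) = T_j$ with $j \geq 1$ then $b \geq \mathfrak{M}(T_j) > m$ by hypothesis.

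With this in hand, for $\sigma = \Re s > 1/m$ I would estimate
\begin{equation*}
\sum_{\substack{n \geq 1 \\ t(n) = \mathfrak{T}}} n^{-\sigma}
\;\leq\; \sum_{q_1 < \cdots < q_\ell} \, \sum_{b_1, \ldots, b_\ell \geq m} \prod_{i=1}^{\ell} q_i^{-b_i \sigma}
\;\leq\; \left( \sum_{p} \sum_{b \geq m} p^{-b\sigma} \right)^{\!\ell},
\end{equation*}
dropping the ordering and distinctness of the primes at the last step. A geometric series then yields
\begin{equation*}
\sum_{p} \sum_{b \geq m} p^{-b\sigma} \;=\; \sum_p \frac{p^{-m\sigma}}{1 - p^{-\sigma}} \;\leq\; \frac{P(m\sigma)}{1 - 2^{-\sigma}},
\end{equation*}
which is finite precisely because $m\sigma > 1$. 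Raising to the $\ell$th power preserves finiteness, giving absolute convergence of $\zeta_{\mathfrak{T}}(s)$ on the half-plane $\{\Re s > 1/m\}$.

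I do not foresee a genuine obstacle here. The only non-trivial ingredient is the identification of the level-$1$ subtrees of $\mathfrak{T}$ with the trees $t(b_i)$ attached to each of the $\ell$ prime factors of $n$, which is immediate from the recursive definition of $t(n)$; everything else is elementary comparison with the prime zeta function, and the bound is evidently very crude (it ignores both the distinctness of the primes and the finer constraint $t(b_i) \in \{T_0, T_1, \ldots, T_s\}$), but this suffices for the stated convergence abscissa.
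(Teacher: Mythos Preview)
Your proof is correct and follows essentially the same approach as the paper's: both hinge on the observation that $t(n) = \mathfrak{T}$ forces every prime exponent in $n$ to be at least $m$, and then compare with a crude majorant that converges for $\sigma > 1/m$. The only cosmetic difference is that the paper relaxes to the full set of $m$-full integers and writes the bound as the Euler product $\prod_p\bigl(1 + (p^{m\sigma}-p^{(m-1)\sigma})^{-1}\bigr)$, whereas you keep the fixed number $\ell$ of prime factors and bound by $\bigl(P(m\sigma)/(1-2^{-\sigma})\bigr)^\ell$; both reductions land on the convergence of $P(m\sigma)$.
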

\begin{proof}
Indeed, for $s = \sigma + it$ with $\sigma>\frac{1}{m}$, using the fact that if $t(n) = {\mathfrak T}$ then $n = p_1^{\alpha_1} \ldots p_\ell^{\alpha_\ell}$ with $\alpha_h \geq m$ for all $1 \leq h \leq \ell$, 
we have
\begin{multline*}|\zeta_\mathfrak{T}(s)|\leqslant \sum\limits_{\substack{n = 1 \\ t(n) = \mathfrak{T}}}^{+\infty}n^{-\sigma}\leqslant \sum\limits_{\substack{n = 1 \\ p|n \Rightarrow p^m|n}}^{+\infty}n^{-\sigma}\\
= \prod_{p}\left(1+p^{-m\sigma}+p^{-(m+1)\sigma}+\cdots\right) = \prod_{p}\left(1+\dfrac{1}{p^{m\sigma}-p^{(m-1)\sigma}}\right)<+\infty. 
\end{multline*}
This concludes the proof.
\end{proof}
In the appendix, we prove that
such a series has an essential singularity at $s = 1/m$.

    \medskip
	After the previous lemmata,
    we are now ready to prove the following result.
	\begin{theorem}\label{main1}
Let $\mathfrak{T}$ be a tree with at least one edge,
written uniquely in the form
$\mathfrak{T} = T\circ T'$, where 
		$$ T = \underbrace{ {e^{T_0}\circ e^{T_0}\circ \cdots \circ e^{T_0}}}_{k\ \textup{times}} $$
        for some tree $T_0$ and $k \in {\mathbb N}$,
        and $T'$ is either 
$t(1)$ or
        $$e^{T_1}\circ e^{T_2}\circ \cdots \circ e^{T_s} $$
where, for each $1\leqslant j\leqslant s$, we have
		$m := \mathfrak{M}(T_0)<\mathfrak{M}(T_j) 
        < +\infty.$ 
        
Then,
		$$\pi_{\mathfrak{T}}(x) = \dfrac{mx^{\frac{1}{m}}}{\log x}\dfrac{(\log\log x)^{k-1}}{(k-1)!}\times\zeta_{T'}\left(\dfrac{1}{m}\right) + R_{\mathfrak{T}}(x),$$
		with
		$$
		R_{\mathfrak{T}}(x)\ll_{m,  k}\begin{cases}
			\dfrac{x^{\frac{1}{m}}(\log\log x)^{k-2}}{\log x},\ \ \text{if}\ \ k\geqslant 2,\\
			\dfrac{x^{\frac{1}{m}}\log\log x}{(\log x)^2},\ \ \text{if}\ \ k = 1.
		\end{cases}
		$$
\end{theorem}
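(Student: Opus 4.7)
The plan is to reduce the count to Landau's classical estimate via a multiplicative decomposition. Every $n$ with $t(n)=\mathfrak{T}$ admits a unique factorisation $n=m_{1}n'$ with $\gcd(m_{1},n')=1$, $t(m_{1})=T$ and $t(n')=T'$: indeed the hypothesis $\mathfrak{M}(T_{0})<\mathfrak{M}(T_{j})$ forces $T_{0}\neq T_{j}$ for every $j$, so the primes of $m_{1}$ are precisely those whose exponent has tree $T_{0}$. Accordingly I would begin with
\[
\pi_{\mathfrak T}(x)=\sum_{\substack{n'\leq x\\ t(n')=T'}}N\!\left(\tfrac{x}{n'};\,n'\right),\qquad N(y;n'):=\#\{m_{1}\leq y:\,t(m_{1})=T,\ \gcd(m_{1},n')=1\},
\]
and dispatch the inner count first.

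The second step is the Landau-type estimate
\[
N(y;n')=\frac{m\,y^{1/m}(\log\log y)^{k-1}}{(k-1)!\,\log y}+R(y),
\]
with $R(y)$ matching the bound for $R_{\mathfrak T}$ stated in the theorem, uniformly in the (at most $s$) primes dividing $n'$. Writing $m_{1}=p_{1}^{a_{1}}\cdots p_{k}^{a_{k}}$ with $p_{1}<\cdots<p_{k}$ and each $a_{i}$ in $A:=\{a:t(a)=T_{0}\}$ (so $\min A=m$), the dominant contribution is the diagonal term $a_{1}=\cdots=a_{k}=m$: the constraint $(p_{1}\cdots p_{k})^{m}\leq y$ is exactly the Landau count of squarefree integers with $k$ prime factors up to $y^{1/m}$, which through $\log\log(y^{1/m})=\log\log y+O(1)$ gives the main term with the required error; for $k=1$ one invokes the quantitative form of the prime number theorem to obtain the $(\log x)^{-2}$ saving. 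The off-diagonal configurations have some $a_{i}$ strictly larger than $m$, hence bounded below by the second-smallest element of $A$, and a crude estimate grouping by the distinguished index shows these contributions fit into $R(y)$. Coprimality to $n'$ merely removes a fixed finite set of primes and perturbs the estimate at the error level only.

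The final step is to sum over $n'$. Choosing a threshold $Z=x^{\theta}$ for small $\theta>0$, for $n'\leq Z$ I would substitute the estimate for $N$ and use $\log(x/n')=\log x+O(\log Z)$ together with $\log\log(x/n')=\log\log x+O(\log Z/\log x)$ to rewrite the dominant contribution as
\[
\frac{m\,x^{1/m}(\log\log x)^{k-1}}{(k-1)!\,\log x}\sum_{\substack{n'\leq Z\\ t(n')=T'}}(n')^{-1/m}.
\]
Since $1/m>1/\mathfrak{M}(T_{j})$ for every $j$, the previous lemma gives $\zeta_{T'}(1/m)<+\infty$, and picking any $\sigma$ with $\max_{j}\tfrac{1}{\mathfrak{M}(T_{j})}<\sigma<\tfrac{1}{m}$ yields the polynomial tail bound $\sum_{n'>Z,\,t(n')=T'}(n')^{-1/m}\ll Z^{\sigma-1/m}$, a negative power of $x$ that is absorbed by $R_{\mathfrak T}(x)$. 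The range $n'>Z$ is handled by the main-term upper bound $N(x/n';n')\ll(x/n')^{1/m}(\log(x/n'))^{k-1}$ combined with the same tail estimate.

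The main obstacle will be the sharp bookkeeping in the second step: verifying that the off-diagonal contributions from $a_{i}>m$ and the coprimality correction both fit strictly within the stated $R(y)$, and in particular, for $k=1$, genuinely achieving $x^{1/m}\log\log x/(\log x)^{2}$ rather than the weaker $x^{1/m}/\log x$ that a direct application of Landau would provide. A secondary and routine difficulty is balancing the three independent error sources --- $R(y)$ at the scale $y=x/n'$, the truncation of $\zeta_{T'}$, and the tail over $n'>Z$ --- through a suitable choice of $\theta$, keeping the implied constants dependent on $m$ and $k$ but independent of $x$.
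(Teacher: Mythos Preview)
Your plan is sound and would prove the theorem, but it is a two-stage reorganisation of the paper's argument rather than a genuinely different route. You first establish the asymptotic for $N(y;n')$ --- i.e.\ the special case $T'=t(1)$ of the theorem, with a harmless coprimality constraint to at most $s$ primes --- and then sum over $n'$. The paper instead writes $n=c^{m}d$, where $c$ collects the primes of exponent exactly $m$ and $d$ absorbs \emph{both} your off-diagonal prime powers and your $n'$. The gain of this packaging is that $d$ is automatically $(m{+}1)$-full regardless of which piece it came from, so a single appeal to the Ivi\'c--Shiu count for $(m{+}1)$-full numbers and a single truncation parameter $\mathcal H$ handle everything at once; the main term then emerges from the case $\omega(c)=k$, where $t(d)=T'$ exactly and the $d$-sum collapses to $\zeta_{T'}(1/m)$. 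By contrast, your ``crude estimate grouping by the distinguished index'' in Step~2 is where the real work hides: to make it precise you must split by $\ell=\#\{i:a_i=m\}$ and bound each $\ell<k$ via Hardy--Ramanujan together with the convergence of $\sum_{d\ (m+1)\text{-full}}d^{-1/m}$ --- which is exactly the paper's treatment of the terms $P_\ell$ with $\ell<k$. So nothing is saved; the bookkeeping merely happens in a different order. Once Step~2 is in hand your Step~3 is routine, and indeed any fixed $Z=x^{\theta}$ with $0<\theta<1$ already suffices, since the $n'$-tail is power-saving. (One small slip: the tail bound you quote, $N(x/n';n')\ll(x/n')^{1/m}(\log(x/n'))^{k-1}$, is not the main-term shape; you want either $(x/n')^{1/m}(\log\log(x/n'))^{k-1}/\log(x/n')$ or, more simply, the trivial $m$-full bound $N(y;\cdot)\ll y^{1/m}$. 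Either version works.)
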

\begin{rem}
We want to emphasize the role of the tree $T_0$ in the theorem, because it fully captures the functional description of $\pi_{\mathfrak T}$ through its multiplicity $k$ and order $m$.
In particular, the tree $T'$ is only responsible for the overall multiplicative factor $\zeta_{T'}(1/m)$.
\end{rem}
\begin{proof}
For any $n$ with $t(n) = \mathfrak{T}$, we have the following unique representation
$n = ab,$ where $t(a) = T, t(b) = T',$ and $\gcd(a, b) = 1$. Set
$$c = \prod\limits_{\substack{p|a \\ \nu_p(a) = m }}p,\ \ 
a_1 = c^m, \ \ \text{and}\ \ a_2 = \prod\limits_{\substack{p|a \\ \nu_p(a) > m}}p^{\nu_p(a)}$$
(if there is no $p$ such that $p | a$ and $\nu_p(a)=m$ we set $c=1$).
Then $a_1$ is coprime with $a_2$ and $a=a_1a_2$. Put $d = a_2b$, then $n$ has the unique representation in the form $n=c^md$, and $\gcd(c,d)=1$. Moreover, we have $p|d \Rightarrow \nu_p(d)>m,$ and
$$t(d) = \underbrace{e^{T_0}\circ e^{T_0}\circ \cdots \circ e^{T_0}}_{{k-\omega(c)}}\circ\,T' = : \mathfrak{T}_{\omega(c)}.$$
Let $P_\ell$ denotes the contribution of those $n\leqslant x$, for which $t(n) = \mathfrak{T}$ and $\omega(c) = \ell$. Therefore,
$$\pi_{\mathfrak{T}}(x) = \sum_{\ell = 0}^k P_\ell,\ \text{where}\ \  P_\ell = \sum\!\!\!\!\!\!\!\!\!\!\!\!\!\!\!\sum\limits_{\substack{c^md\leqslant x,\  \gcd(c,d) = 1 \\ \omega(c) = \ell,\ t(d) = \mathfrak{T}_\ell \\ p|d \Rightarrow \nu_p(d)>m}}\!\!\!\!\!\!\!\!\!\!\!\!\mu^2(c).$$
Using the asymptotic formula (see \cite{IS82})
\begin{equation}\label{full}
	\#\left\{d\leqslant Y: p|d \Rightarrow \nu_p(d)>m\right\} = C_m Y^{\frac{1}{m+1}}(1+o(1)),\ \ (C_m>0,\ \ Y\to+\infty),
	\end{equation}
we conclude that 
$$P_0\leqslant \sum\limits_{\substack{d\leqslant x \\ p|d \Rightarrow \nu_p(d)>m}} 1\ll_m x^{\frac{1}{m+1}}.$$
Now let $\ell\geqslant 1$, then
\begin{equation}\label{P11}
	P_\ell = \bigl(\!\!\!\! \sum\limits_{\substack{d\leqslant \mathcal{H} \\ t(d) = \mathfrak{T}_\ell \\ p|d\Rightarrow \nu_p(d)>m}} + \sum\limits_{\substack{ \mathcal{H}<d\leqslant x \\ t(d) = \mathfrak{T}_\ell \\ p|d\Rightarrow \nu_p(d)>m}}\!\!\bigr) \sum\limits_{\substack{c\leqslant y \\\gcd(c, d) = 1\\ \omega(c) = \ell}}\mu^2(c) = P_\ell^{(1)}+P_\ell^{(2)},\ \ \text{say,}
	\end{equation}
    where for shortness we set $y = (x/d)^{{1}/{m}},$ and $\mathcal{H} = x^{o(1)}$ to be defined later.
First, we estimate the sum $P_\ell^{(2)}$. We have $P_\ell^{(2)} = P_\ell^{(3)}+P_\ell^{(4)}$, where $P_\ell^{(3)}$ denotes the contribution of $\mathcal{H}<d\leqslant \sqrt{x}$, and $P_\ell^{(4)}$ denotes the contribution of the remaining $d$.
Consider the series 
\begin{equation}\label{S_m}
	S_m(X) = \sum\limits_{\substack{d>X \\ p|d \Rightarrow \nu_p(d)>m}}\dfrac{1}{d^{1/m}}.
	\end{equation}
Using partial summation and \eqref{full},
we obtain
\begin{equation*}
	S_m(X) = \dfrac{1}{m}\int_X^{+\infty}\dfrac{\#\left\{X<d\leqslant t: p|d\Rightarrow \nu_p(d)>m\right\}}{t^{1+\frac{1}{m}}} dt\ll_m\int_X^{+\infty}\dfrac{dt}{t^{1+\frac{1}{m(m+1)}}}\ll_m\dfrac{1}{X^{\frac{1}{m(m+1)}}}.
\end{equation*}
Hence, using the Hardy~--~Ramanujan inequality (see \cite{HR17}), 
$$\#\left\{n\leqslant x: \omega(n) = v\right\}\ll \dfrac{x(\log \log x+c_0)^{v-1}}{(v-1)!\log x},$$
where $x\geqslant 2$, $v\geqslant 1$, $c_0>0$, and the implied constant is absolute, we get
\begin{multline*}
	P_\ell^{(3)}\leqslant \sum\limits_{\substack{\mathcal{H}<d\leqslant \sqrt{x} \\ p|d\Rightarrow \nu_p(d)>m}}\sum\limits_{\substack{\omega(c) = \ell\\ c\leqslant (\frac{x}{d})^{1/m}}} 1\ll_m \sum\limits_{\substack{\mathcal{H}<d\leqslant \sqrt{x} \\ p|d\Rightarrow \nu_p(d)>m}}\left(\dfrac{x}{d}\right)^{\frac{1}{m}} \dfrac{(\log\log x + c_0 )^{\ell-1}}{(\ell-1)!\log\frac{x}{d}}\\
	\ll \dfrac{x^{\frac{1}{m}}(\log\log x+c_0)^{\ell-1}}{\log x}\sum\limits_{\substack{d>\mathcal{H} \\ p|d \Rightarrow \nu_p(d)>m}}\dfrac{1}{d^{1/m}}\ll_{m, k} \dfrac{x^{\frac{1}{m}}(\log\log x)^{\ell-1}}{\mathcal{H}^{\frac{1}{m(m+1)}}\log x}.
\end{multline*}
Similarly, we find that
$$P_\ell^{(4)}\ll \sum\limits_{\substack{d>\sqrt{x} \\ p|d\Rightarrow \nu_p(d)>m}}\left(\dfrac{x}{d}\right)^{\frac{1}{m}}\ll x^{\frac{1}{m}-\frac{1}{2m(m+1)}}.$$
Thus, for $1\leqslant \ell\leqslant k$, we have
\begin{equation}\label{P22}
	P_\ell = P_\ell^{(1)}+O_{k,m}\left(\dfrac{x^{\frac{1}{m}}(\log\log x)^{\ell-1}}{\mathcal{H}^{\frac{1}{m(m+1)}}\log x} + x^{\frac{1}{m}-\frac{1}{2m(m+1)}}\right)\!.
	\end{equation}
Consider now the sum $P_\ell^{(1)}$. We have 
\begin{equation}\label{P33}
	P_\ell^{(1)} = \sum\limits_{\substack{d\leqslant \mathcal{H} \\ t(d) = \mathfrak{T}_\ell \\ p|d\Rightarrow \nu_p(d)>m}}\sum\limits_{\substack{c \leqslant y \\ \gcd(c, d) = 1 \\ \omega(c) = \ell}}\mu^2(c) = P_\ell^{(5)}- \mathcal{M}_\ell,
	\end{equation}
where
$$ P_\ell^{(5)} = \sum\limits_{\substack{d\leqslant \mathcal{H} \\ t(d) = \mathfrak{T}_\ell \\ p|d\Rightarrow \nu_p(d)>m}}\sum\limits_{\substack{c \leqslant y \\ \omega(c) = \ell}}\mu^2(c),
$$
and
$$
\mathcal{M}_\ell = \sum\limits_{\substack{d\leqslant \mathcal{H} \\ t(d) = \mathfrak{T}_\ell \\ p|d\Rightarrow \nu_p(d)>m}}M_\ell(x; d),$$
$$M_\ell(x;d) = \sum\limits_{\substack{c \leqslant y \\ \gcd(c, d) > 1 \\ \omega(c) = \ell}}\mu^2(c) = \sum\limits_{\substack{\Delta | d \\ \Delta > 1}} \sum\limits_{\substack{c \leqslant y \\ \gcd(c, d) = \Delta \\ \omega(c) = \ell}}\mu^2(c).$$
Hence, 
$$
M_\ell(x;d) = \sum\limits_{\substack{\Delta | d \\ \Delta > 1}}\sum\limits_{\substack{c' \leqslant \frac{y}{\Delta} \\ \gcd(c', \frac{d}{\Delta}) = 1 \\ \omega(c'\Delta) = \ell}}\mu^2(c'\Delta) = \sum\limits_{\substack{\Delta | d \\ \Delta > 1 \\ \omega(\Delta)\leqslant \ell }}\mu^2(\Delta)\sum\limits_{\substack{c' \leqslant \frac{y}{\Delta} \\ \gcd(c', d) = 1 \\ \omega(c') = \ell-\omega(\Delta)}}\mu^2(c').
$$
Therefore, we have
\begin{multline}
	M_\ell(x;d) \leqslant \sum_{v=0}^{\ell-1}\sum\limits_{\substack{ \Delta | d\\ \Delta > 1 \\ \omega(\Delta) = \ell - v}}\mu^2(\Delta)\sum\limits_{\substack{c'\leqslant \frac{y}{\Delta} \\ \omega(c') = v}}1\\
	\ll \sum\limits_{\substack{\Delta|d \\ \Delta >1}}\mu^2(\Delta)+ \sum_{v=1}^{\ell-1}\sum\limits_{\substack{ \Delta | d\\ \Delta > 1 \\ \omega(\Delta) = \ell - v}}\mu^2(\Delta)\dfrac{y}{\Delta \log\frac{y}{\Delta}}\dfrac{(\log\log \frac{y}{\Delta}+c_0)^{v-1}}{(v-1)!}.
\end{multline}
If $\ell = 1$, then the second sum is empty. Further, since $y/\Delta \geqslant x^{1/m-o(1)}$
and
$$\sum_{\Delta|d}\dfrac{\mu^2(\Delta)}{\Delta} = \dfrac{d}{\varphi(d)},$$
we conclude that
	$$M_\ell(x;d)\ll_{m, k} \tau(d) + \mathbb{I}_{\ell\geqslant 2}\dfrac{y(\log\log x)^{\ell-2}}{\log x}\dfrac{d}{\varphi(d)}.
$$
Hence, using the estimate
$$\sum_{d\leqslant Y}\tau(d)\ll Y\log Y,$$
we see that
\begin{equation*}
	\mathcal{M}_\ell \ll_{m, k} \mathcal{H}\log\mathcal{H} + \mathbb{I}_{\ell\geqslant 2} 	\dfrac{x^{\frac{1}{m}}(\log\log x)^{\ell-2}}{\log x}\sum\limits_{\substack{d\leqslant \mathcal{H} \\ t(d) = \mathfrak{T}_\ell \\ p|d\Rightarrow \nu_p(d)>m}}\dfrac{d^{1-\frac{1}{m}}}{\varphi(d)}.
	\end{equation*}
The last sum does not exceed
\begin{equation*}
	D_m(\mathcal{H}) = \sum\limits_{\substack{d\leqslant \mathcal{H} \\ p|d\Rightarrow \nu_p(d)>m}}\dfrac{d^{1-\frac{1}{m}}}{\varphi(d)} = \sum\limits_{\substack{d\leqslant \mathcal{H}  \\ p|d\Rightarrow \nu_p(d)>m}}{d^{-\frac{1}{m}}}\sum_{\Delta| d}\dfrac{\mu^2(\Delta)}{\Delta} = \sum_{\Delta\leqslant \mathcal{H}}\dfrac{\mu^2(\Delta)}{\varphi(\Delta)}\sum\limits_{\substack{d\leqslant \mathcal{H} \\d\equiv 0\!\!\pmod{\Delta}\\ p|d\Rightarrow \nu_p(d)>m} }d^{-\frac{1}{m}}.
	\end{equation*}
Each $d$ in the inner sum has a unique representation in the form
$$d = \Delta\Delta_1\delta,$$
where $(\delta,\Delta) = 1$ and $\Delta_1|\Delta^\infty$ (meaning that there is some $N\geq 1$ such that $\Delta_1$ divides $\Delta^N$). Therefore,
$$D_m(\mathcal{H})\leqslant \sum_{\Delta\leqslant \mathcal{H}} \dfrac{\mu^2(\Delta)}{\Delta^{\frac{1}{m}}\varphi(\Delta)}\sum_{\Delta_1|\Delta^\infty}\Delta_1^{-\frac{1}{m}}\times\sum\limits_{\substack{\delta = 1 \\ p|\delta \Rightarrow \nu_p(\delta)>m}}^{+\infty}\delta^{-\frac{1}{m}}.$$
Since
$$\sum_{\Delta_1|\Delta^\infty}\Delta_1^{-\frac{1}{m}} = \prod_{p|\Delta}\left(1+p^{-\frac{1}{m}}+p^{-\frac{2}{m}}+\cdots\right) = \prod_{p|\Delta}\left(1+\dfrac{1}{p^{\frac{1}{m}}-1}\right)\!,$$
$$p^{\frac{1}{m}}-1\geqslant 2^{\frac{1}{m}}-1\geqslant {\log 2}/{m},$$
it follows that
$$ D_m(\mathcal{H})\leqslant \sum_{\Delta\leqslant \mathcal{H}} \dfrac{\mu^2(\Delta)C(m)^{\omega(\Delta)}}{\Delta^{\frac{1}{m}}\varphi(\Delta)} S_m\left(1/2 \right)\ll_m \sum_{\Delta = 1}^{+\infty} \Delta^{-1-\frac{1}{m}+o(1)}\ll_m 1,$$
where $S_m$ is defined in \eqref{S_m}, and $C(m) = m/\log 2 + 1.$ Thus,
\begin{equation}\label{P44}
	\mathcal{M}_\ell \ll_{m, k} \mathcal{H}\log\mathcal{H} + \mathbb{I}_{\ell\geqslant 2} 	\dfrac{x^{\frac{1}{m}}(\log\log x)^{\ell-2}}{\log x}.
	\end{equation}
Consider now the sum $P_\ell^{(5)}$.  Using the asymptotic formula (see \cite{Lan})
$$\sum\limits_{\substack{a\leqslant x \\ \omega(a) = k}}\mu^2(a) = \dfrac{x }{\log x}\dfrac{(\log\log x)^{k-1}}{(k-1)!} + R_k(x),$$
where
$$R_k(x)\ll \begin{cases}
	\dfrac{x}{(\log x)^2},\ \ \text{if\ \ } k=1,\\
	\\
	\dfrac{x(\log\log x)^{k-2}}{\log x},\ \ \text{if\ \ }k\geqslant 2,
\end{cases}
$$
we obtain
\begin{equation}\label{Main}
P_\ell^{(5)} = \sum\limits_{\substack{d\leqslant \mathcal{H} \\ t(d) = \mathfrak{T}_\ell \\ p|d\Rightarrow \nu_p(d)>m}}\left(\left(\dfrac{x}{d}\right)^{{1}/{m}}\dfrac{1}{\log(({x}/{d})^{\frac{1}{m}}) }\dfrac{(\log\log((x/d)^{\frac{1}{m}}))^{\ell-1}}{(\ell-1)!}+R_\ell\left( \left(\dfrac{x}{d}\right)^{{1}/{m}}  \right) \right)\!.
\end{equation}
Let $\mathcal{R}_\ell$ denote the contribution of $R_\ell$ to the sum over $d$. Then for $\ell = 1$ we have
$$\mathcal{R}_1 \ll \sum\limits_{\substack{d\leqslant \mathcal{H} \\ t(d) = \mathfrak{T}_\ell \\ p|d\Rightarrow \nu_p(d)>m}}\left(\dfrac{x}{d}\right)^{{1}/{m}}\dfrac{1}{\log^2(({x}/{d})^{\frac{1}{m}})}\ll_m\dfrac{x^{\frac{1}{m}}}{(\log x)^2}.$$
For $\ell\geqslant 2$ we have
$$\mathcal{R}_\ell \ll_m \sum\limits_{\substack{d\leqslant \mathcal{H} \\ t(d) = \mathfrak{T}_\ell \\ p|d\Rightarrow \nu_p(d)>m}} \left(\dfrac{x}{d}\right)^{{1}/{m}}\dfrac{(\log\log((x/d)^{\frac{1}{m}}))^{\ell-2}}{\log(({x}/{d})^{\frac{1}{m}}) }\ll_m \dfrac{x^{\frac{1}{m}}(\log\log x)^{\ell-2}}{\log x}.$$
Thus,
\begin{equation}\label{P55}
	\mathcal{R}_\ell \ll_m \mathbb{I}_{\ell = 1}\dfrac{x^{\frac{1}{m}}}{(\log x)^2}+\mathbb{I}_{\ell\geqslant 2}\dfrac{x^{\frac{1}{m}}(\log\log x)^{\ell-2}}{\log x}.
	\end{equation}
Denote by $\mathcal{P}_\ell$ the contribution of the first term in \eqref{Main} to the sum over $d$. Since $\mathcal{H} = x^{o(1)}$ and
$$\log\left(\left( \dfrac{x}{d}\right)^{{1}/{m}}\right) = \dfrac{\log x}{m}\left(1-\dfrac{\log d}{\log x}\right) = \dfrac{\log x}{m}\left(1+O\left(\dfrac{\log \mathcal{H}}{\log x}\right)\right)\!,$$
we obtain
\begin{multline*}\log\log\left(\left( \dfrac{x}{d}\right)^{{1}/{m}}\right) = \log\log x-\log m+O\left(\dfrac{\log \mathcal{H}}{\log x}\right)
	= \log\log x\left(1+O_m\left(\dfrac{1}{\log\log x} \right)\right)\!.
	\end{multline*}
Therefore, for $\ell\geqslant 1$ we have
\begin{multline}\label{P66}
	\mathcal{P}_\ell = \sum\limits_{\substack{d\leqslant \mathcal{H} \\ t(d) = \mathfrak{T}_\ell \\ p|d\Rightarrow \nu_p(d)>m}}\left(\dfrac{x}{d}\right)^{{1}/{m}}\dfrac{m}{\log x} \dfrac{(\log\log x)^{\ell-1}}{(\ell-1)!}\\
	\times\left(1+O_m\left(\dfrac{\log\mathcal{H}}{\log x} \right) \right)\left(1+O_m\left(\dfrac{1}{\log\log x}\right) \right)^{\mathbb{I}_{\ell\geqslant 2}}\\
	= \dfrac{m x^{\frac{1}{m}}}{\log x}\dfrac{(\log\log x)^{\ell-1}}{(\ell-1)!}\sum\limits_{\substack{d = 1 \\ t(d) = \mathfrak{T}_\ell \\ p|d\Rightarrow \nu_p(d)>m}}^{+\infty} d^{-\frac{1}{m}} + \mathcal{R}_\ell',
	\end{multline}
where
\begin{equation}\label{P77}
	\mathcal{R}_\ell'\ll_{m, k} \dfrac{x^{\frac{1}{m}}(\log\log x)^{\ell-1}}{\log x}\left({\mathcal{H}^{-\frac{1}{m(m+1)}}}+\dfrac{\mathbb{I}_{\ell\geqslant 2}}{\log\log x} + \dfrac{\log\mathcal{H}}{\log x}\right)\!.
	\end{equation}
Thus, it follows from \eqref{P11}, \eqref{P22}, \eqref{P33}, \eqref{P44}~--~\eqref{P77} that for $1\leqslant \ell \leqslant k$,
$$P_\ell = P_\ell^{MT} + P_\ell^{Err},$$
where 
$$P_\ell^{MT} = m C_m(\ell)\dfrac{ x^{\frac{1}{m}}}{\log x}\dfrac{(\log\log x)^{\ell-1}}{(\ell-1)!},\ \ \ C_m(\ell)=\!\!\!\!\!\!\!\!\!\! \sum\limits_{\substack{d = 1 \\ t(d) = \mathfrak{T}_\ell \\ p|d\Rightarrow \nu_p(d)>m}}^{+\infty} d^{-\frac{1}{m}},$$
and
\begin{multline}P^{Err}_\ell\ll_{m, k} \mathcal{H}\log\mathcal{H} + \mathbb{I}_{\ell = 1}\dfrac{x^{\frac{1}{m}}}{(\log x)^2} \\
	+ \dfrac{x^{\frac{1}{m}}(\log\log x)^{\ell-1}}{\log x}\left({\mathcal{H}^{-\frac{1}{m(m+1)}}}+ \dfrac{\mathbb{I}_{\ell\geqslant 2}}{\log\log x}+ \dfrac{\log\mathcal{H}}{\log x}\right) + x^{\frac{1}{m}-\frac{1}{2m(m+1)}}.
	\end{multline}
Choosing $\mathcal{H} = (\log x/\log\log x)^{m(m+1)}$, we get
$$P^{Err}_\ell \ll_{m, k} \mathbb{I}_{\ell = 1}\dfrac{x^{\frac{1}{m}}\log\log x}{(\log x)^2} + \mathbb{I}_{\ell\geqslant 2} \dfrac{x^{\frac{1}{m}}(\log\log x)^{\ell-2}}{\log x}.$$

It turns out that the main contribution to $\pi_{\mathfrak{T}}(x)$ comes from the term with $\ell=k$. Define $\pi^{MT}_{\mathfrak{T}}(x) = P^{MT}_k$. Consider two cases. First, let $T' \neq t(1)$, then for the main term, $\pi^{MT}_{\mathfrak{T}}(x)$, we have

$$\pi^{MT}_{\mathfrak{T}}(x) = mC_m(k)\dfrac{x^{\frac{1}{m}}}{\log x}\dfrac{(\log\log x)^{k-1}}{(k-1)!},$$
where
$$C_m(k) = \sum\limits_{\substack{d = 1 \\ t(d) = \mathfrak{T}_0 \\ p|d\Rightarrow \nu_p(d)>m}}^{+\infty}d^{-\frac{1}{m}} = \sum\limits_{\substack{d = 1 \\ t(d) = T'}}^{+\infty}d^{-\frac{1}{m}} = \zeta_{T'}\left(\dfrac{1}{m}\right)\!.$$
On the other hand, if $T' = t(1)$, then one gets again
$$C_m(k) = \sum\limits_{\substack{d = 1 \\ t(d) = t(1)}}^{+\infty}d^{-\frac{1}{m}} = 1 = \zeta_{T'}\left(\dfrac{1}{m}\right)\!.$$
Thus, we have
$$\pi_{\mathfrak{T}}^{MT}(x) = \dfrac{m x^{\frac{1}{m}}}{\log x}\dfrac{(\log\log x)^{k-1}}{(k-1)!}\times\zeta_{T'}\left(\dfrac{1}{m}\right)\!.$$
Define the remainder term, $\pi_{\mathfrak{T}}^{Err}(x),$ to be
$$\pi_{\mathfrak{T}}^{Err}(x) = P^{Err}_k + P_0 + \sum_{\ell = 1}^{k-1} P_\ell.$$ Then we have $$\pi_{\mathfrak{T}}(x) = \pi_{\mathfrak{T}}^{MT}(x)+\pi_{\mathfrak{T}}^{Err}(x)$$
and
\begin{equation}\pi^{Err}_{\mathfrak{T}}(x) \ll \mathbb{I}_{k=1}\dfrac{x^{\frac{1}{m}}}{(\log x)^2} + \mathbb{I}_{k\geqslant 2} \dfrac{x^{\frac{1}{m}}(\log\log x)^{k-2}}{\log x}.
\end{equation}
This concludes the proof.
\end{proof}

Below we present a number of special cases.
\begin{ex}\label{examples}
\begin{itemize}
\item[(i)] Consider $\mathfrak{T} = T \circ T'$, where $$T = e^r \circ e^r \circ \cdots \circ e^r$$ (product of $k$ copies of $e^r$, with $k \geq 1$), $T' = r = e^{\emptyset}$ so that
$m={\mathfrak M}(r) = 1 < \mathfrak{M}(\emptyset) = +\infty$ and $\zeta_{r}(1) = 1$. Then $\mathfrak T$ is nothing but a tree with $k$ edges emanating from the root and the formula for $\pi_\mathfrak{T}(x)$ reproduces either the prime number theorem ($k=1$) or Landau's result about the asymptotic density of numbers that are products of $k$ distinct primes ($k > 1$).

\item[(ii)] Let $\mathfrak{T}=t(12)$, then $T = t(2)$ (so that $k=1$, $T_0 = r$ and $m=1$) and $T' = t(4)$. Thus we get
$$\pi_{\mathfrak T}(x) \sim 
\frac {x}{\log x} \, \zeta_{t(4)}(1)$$ for $x \to \infty$, where
$$\zeta_{t(4)}(1) = \sum_{p,q} \frac 1 {p^q} 
< + \infty\ . $$
This result can be deduced from the work of Naslund in the following way.
Noticing that by \cite{Nas} one has 
$\#\{n \leq x \ | \ n = p q^2, p \neq q\} 
\sim \frac x {\log x} P(2) 
= \frac x {\log x} \sum_p \frac 1 {p^2}$ for $x \to \infty$
and, more generally, for every prime $r$,
$\#\{n \leq x \ | \ n = p q^r, p \neq q\} \sim \frac x {\log x} P(r) 
= \frac x {\log x} \sum_p \frac 1 {p^r} $ for $x \to \infty$,
one gets indeed
$$\pi_{\mathfrak T}(x) = \sum_r \#\{n \leq x \ | \ n = p q^r, p \neq q\} \sim \frac x {\log x} \sum_r\sum_p \frac 1 {p^r} \ , \ x \to +\infty. $$

\item As another example consider $\mathfrak{T} = t(331\,776)$, $331\,776 = 2^{2^2\cdot 3} \cdot 3^{2^2}$.
Here, $T = e^{t(4)} = t(16)$, $T' = e^{t(12)} = t(2^{12})$ (where $T_0 = t(4)$ and $T_1 = t(12) $) and $(m,k)=(4,1)$. Moreover,
$$\zeta_{t(2^{12})}(s) = \sum\limits_{\substack{p_1, \ldots, p_4\\ p_2\neq p_4}} \frac{1}{p_1^{p_2^{p_3} p_4 \, s}}.$$
Then $$\pi_{\mathfrak{T}}(x) \sim \dfrac{4x^{\frac{1}{4}}}{\log x}\zeta_{t(2^{12})}\left(\dfrac{1}{4}\right)\!, \ x \to +\infty.$$
\item If we take $\mathfrak{T} = t(207\,360\,000)$, $207\,360\,000 = 2^{2^2\cdot 3} \cdot 3^{2^2} \cdot 5^{2^2}$, then as above we will have $T_0 = t(4), T_1 = t(12)$, $T' = t(2^{12})$, but $(m, k) = (4, 2)$. Therefore,
$$\pi_\mathfrak{T}(x) \sim \dfrac{4x^{\frac{1}{4}}\log\log x}{\log x}\zeta_{t(2^{12})}\left(\dfrac{1}{4}\right)\!, \ x \to +\infty. $$
\end{itemize}
\end{ex}

\section{Conclusions and outlook}\label{outlook}

In this paper we computed the asymptotic density of integers with a given tree structure, namely
$$\pi_{\mathfrak{T}}(x) \sim \dfrac{mx^{\frac{1}{m}}}{\log x}\dfrac{(\log\log x)^{k-1}}{(k-1)!}\times\zeta_{T'}\left(\dfrac{1}{m}\right),\quad x\to +\infty.$$
This formula includes as special cases classical results as the prime number theorem and later work by Landau.

It would be interesting to determine if the asymptotic densities of different trees $\mathfrak{T}=T \circ T'$ and $\mathfrak{T}'= T \circ T''$ and $T''$ with the same values of the pair $(m,k)$ are distinguished by the corresponding values of the zeta functions $\zeta_{T'}(1/m)$ and $\zeta_{T''}(1/m)$. Of course, this would mean that different trees have different densities.


\medskip
We actually plan to get a "uniform" version of this result (meaning that the estimates are uniform in $m$ and $k$).

Using the Selberg-Delange method (see e.g. \cite[Chapter II.5]{Ten}), and more specifically applying \cite[Theorem 6.1]{Ten} to 
$$\sum\limits_{\substack{n\le x, \\ \gcd(n, d) = 1}} z^{\o(n)}\mu^2(n) \ , $$
one can show by Theorem 6.3 therein the asympotic formula
\begin{equation}\label{asymptform}
\sum\limits_{\substack{n \leqslant x:\\ \omega (n) = \ell \\ \gcd(n, d) = 1}} \mu^2(n) = \dfrac{x}{\log x}\dfrac{(\log\log x)^{\ell-1}}{(\ell-1)!}\left\{\lambda_d\left(\dfrac{\ell-1}{\log\log x}\right)+O_\varepsilon\left(\dfrac{\ell c(d)} {(\log\log x)^2}\right)\right\},
\end{equation}
where $1\leqslant \ell \leqslant A\log\log x,$ $0<A\leqslant 2-\varepsilon$,
$c(d)\ll_\varepsilon \exp((\omega(d)^\varepsilon))$ and
$$\lambda_d(z) = \dfrac{1}{\Gamma(z+1)}\prod_p\left\{\left(1+\dfrac{z}{p}\right)\left(1-\dfrac{1}{p}\right)^z\right\}\prod_{p|d}\left(1+\dfrac{z}{p}\right)^{-1} \ . $$
One should then be able to apply it to get an asymptotic formula for $\pi_T(x)$, displaying the desired uniform behaviour.
Notice that for $z \to 0$ we have that $\lambda_d \to 1$, meaning that formula \eqref{asymptform} reproduces Landau's theorem.

\begin{rem} Among some other related issues that we plan to discuss in the near future we mention:
\begin{itemize}
\item 
It is possible to obtain an asymptotic expansion for $\pi_T(x)$ in the form of an asymptotic series 
(cf. e.g. \cite{De71,CrEr21}).
\item 
The uniform estimate mentioned above 
could hopefully help to identify the highest value of 
$\frac {\pi_T(x)}{x}$ for a given $x$ 
(it might still happen that the same value could be shared by different trees).
\item 
It would be interesting to recover Zipf's law 
for the rank 
of trees appearing up to $x$ (see \cite{CGOV})
on theoretical grounds.
\item 
A further study of the analytic properties of the tree zeta functions seems an interesting topic on its own. (E.g., their analytic continuation, behaviour at singular points, ....).
\end{itemize}
\end{rem}

\medskip
\noindent{\bf Acknowledgments}
R.C. is partially supported by Sapienza Universit\`a di Roma. P.C. is partially supported by Alma Mater Studiorum
University of Bologna, by PRIN project 2022 N.2022B5LF52
and by the Italian Extended Partnership PE01--FAIR (Future Artificial Intelligence Research, proposal code PE00000013) under the National Recovery and Resilience Plan. The study of V.I. was implemented in the framework of the Basic Research Program at HSE University in 2025.

\section{Appendix}

Let ${\mathcal P} \subset {\mathbb N}$ be a subset of the natural numbers, 
$k \in {\mathbb N}$ and 
${\mathcal F}: {\mathcal P}^k \to {\mathbb C}$ be a $\mathbb C$-valued function. We are interested in evaluating the sum of ${\mathcal F}(p_1,\ldots,p_k)$ over all $k$-tuples $(p_1,\ldots,p_k)$ of {\it distinct} elements from $\mathcal P$. We set $[k]:=\{1,2,\ldots,k\}$.

We have the following partition-based inclusion-exclusion formula.
\begin{theorem} 
Suppose that the series $\sum_{p_1, \ldots, p_k \in {\mathcal P}} {\mathcal F}(p_1,\ldots,p_k)$ is absolutely convergent. Then one has
\begin{equation} 
\label{sumformula}
\sum\limits_{\substack{p_1,\ldots,p_k \in {\mathcal P}\\ 
i \neq j \Rightarrow p_i \neq p_j }}
{\mathcal F}(p_1,\ldots,p_k)
= \sum_{\ell = 1}^k (-1)^{k - \ell}
\sum\limits_{\substack{A_1 \sqcup \cdots \sqcup A_\ell = [k]\\ 
i < j \Rightarrow \min A_i < \min A_j}}c_{\mathcal{A}_1, \ldots, \mathcal{A}_\ell}
\sum\limits_{\substack{q_1,\ldots,q_\ell \in {\mathcal P}
}}
{\mathcal F}(p_1,\ldots,p_k),
\end{equation}
where in the last sum we put $p_j = q_v\,$ for each $j \in A_v $ with $1 \leq v \leq \ell,$ and
$$c_{\mathcal{A}_1, \ldots, \mathcal{A}_\ell} = \left(\#\mathcal{A}_1-1\right)!\cdots \left(\#\mathcal{A}_\ell-1\right)!.$$
\end{theorem}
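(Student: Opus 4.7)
The strategy is Möbius inversion on the partition lattice $\Pi_k$ of $[k]$, using the classical explicit formula for its Möbius function. For each set partition $\pi$ of $[k]$ with blocks $A_1,\ldots,A_\ell$ (uniquely ordered so that $\min A_1<\cdots<\min A_\ell$), I would introduce two quantities:
$$T(\pi):=\sum_{q_1,\ldots,q_\ell\in\mathcal P}\mathcal F(p_1,\ldots,p_k)\quad(p_j=q_v\text{ for }j\in A_v),$$
which is precisely the inner double sum on the right-hand side of \eqref{sumformula} (with the $q_v$'s free to coincide), and
$$S(\pi):=\sum_{\substack{(p_1,\ldots,p_k)\in\mathcal P^k\\ p_i=p_j\,\Leftrightarrow\, i\sim_\pi j}}\mathcal F(p_1,\ldots,p_k),$$
the sum over tuples whose equality pattern is exactly $\pi$. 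Both series converge absolutely since each ranges over a subset of $\mathcal P^k$ and $\sum_{\mathcal P^k}|\mathcal F|<+\infty$ by hypothesis. The left-hand side of \eqref{sumformula} is by definition $S(\hat 0)$, with $\hat 0$ the all-singletons partition.

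The key combinatorial identity, obtained by classifying each tuple contributing to $T(\pi)$ by its exact equality pattern $\sigma$, is
$$T(\pi)=\sum_{\sigma\geq \pi}S(\sigma),$$
where $\sigma\geq \pi$ in $\Pi_k$ means that $\sigma$ is coarser than $\pi$. Applying Möbius inversion on $\Pi_k$ at $\pi=\hat 0$ gives
$$S(\hat 0)=\sum_{\sigma\in\Pi_k}\mu(\hat 0,\sigma)\,T(\sigma).$$
The classical formula (see e.g.\ Stanley, \emph{Enumerative Combinatorics}, Vol.~I) then yields
$$\mu(\hat 0,\sigma)=\prod_{v=1}^{\ell}(-1)^{|A_v|-1}(|A_v|-1)!$$
for any partition $\sigma$ with blocks $A_1,\ldots,A_\ell$. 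Since $\sum_v(|A_v|-1)=k-\ell$, the sign collapses to $(-1)^{k-\ell}$ and the product is exactly $c_{A_1,\ldots,A_\ell}$; organising the sum by the number of blocks $\ell$ and using the ordering by minima to count each unordered partition once, one recovers the right-hand side of \eqref{sumformula}.

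Conceptually there is no real obstacle: absolute convergence makes both the partition-by-pattern decomposition and the rearrangement in the Möbius inversion legitimate, and the rest is a direct application of well-known facts about $\Pi_k$. The one step that requires a little care is the verification of the explicit form of the Möbius function on the partition lattice; being classical, one could either cite it or include a short proof by induction on $k$ via a standard block-merging argument.
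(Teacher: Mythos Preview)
Your proof is correct, but the approach differs from the paper's. The paper argues by direct expansion: it writes the distinctness constraint as
\[
F=\sum_{p_1}\Bigl(\sum_{p_2}-\sum_{p_2=p_1}\Bigr)\cdots\Bigl(\sum_{p_k}-\sum_{p_k=p_1}-\cdots-\sum_{p_k=p_{k-1}}\Bigr)\mathcal F(p_1,\ldots,p_k),
\]
expands the parentheses, and observes that each resulting term corresponds to an ordered partition $\mathcal A_1\sqcup\cdots\sqcup\mathcal A_\ell=[k]$ with $\min\mathcal A_i<\min\mathcal A_j$ for $i<j$; the coefficient $(-1)^{\#\mathcal A_\nu-1}(\#\mathcal A_\nu-1)!$ is then computed block by block by counting, with sign, the ways to chain equalities within $\mathcal A_\nu$. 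Your argument is more structural: you recognise the identity $T(\pi)=\sum_{\sigma\ge\pi}S(\sigma)$ and invert it on $\Pi_k$, importing the coefficients from the known formula for $\mu_{\Pi_k}(\hat 0,\sigma)$. The paper's route is fully self-contained and in effect rederives that M\"obius function from scratch; yours is shorter and explains conceptually \emph{why} the coefficients $(-1)^{k-\ell}\prod_v(|A_v|-1)!$ appear, at the cost of citing (or reproving) a standard fact about the partition lattice. Both are valid and the absolute-convergence hypothesis justifies all rearrangements in either case.
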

\begin{proof}
Let $$F = \sum\limits_{\substack{p_1,\ldots,p_k \in {\mathcal P}\\ 
i \neq j \Rightarrow p_i \neq p_j }}
{\mathcal F}(p_1,\ldots,p_k).$$ Then we have
\begin{multline*}
F = \sum_{p_1}\sum_{p_2 \not{\in} \left\{p_1\right\}}\cdots\sum_{p_k \not{\in}\left\{p_1, p_2, \ldots, p_{k-1}\right\}} \mathcal{F}(p_1, p_2, \ldots, p_k)\\
= \sum_{p_1}\left(\sum_{p_2 }-\sum_{p_1 = p_2} \right)\cdots\left(\sum_{p_k}-\sum_{p_k = p_1}-\sum_{p_k = p_2}-\cdots-\sum_{p_k = p_{k-1}}\right) \mathcal{F}(p_1, p_2, \ldots, p_k).
\end{multline*}

Let us expand the parentheses in this equality. 
Then we get that $F$ is the sum of a number of terms of the form $\sum \sum \cdots \sum$ ($k$ sums) with a $\pm$ sign in front. 
For any such term,
we denote by $\mathcal{A}_1$ the set of all indices 
$\alpha_1\in[k]$ for which $p_{\alpha_1}=p_1$ 
(taking into account all the relevant equalities), and write $q_1$ for this number. Next, let $q_2$ be 
the first number among $p_1, p_2,\ldots$ not yet occurring among the $p_\alpha$'s with $\alpha\in\mathcal{A}_1$. Denote 
by $\mathcal{A}_2$ the set of those indices $\alpha_2$ for which $p_{\alpha_2}=q_2$. Proceeding 
in this way, we obtain a partition of the index set $[k]$:
$$
  \mathcal{A}_1 \mathbin{\scalebox{1.4}{$\sqcup$}} 
  \mathcal{A}_2 \mathbin{\scalebox{1.4}{$\sqcup$}} \cdots \mathbin{\scalebox{1.4}{$\sqcup$}}\mathcal{A}_{\ell} \;=\; [k],
$$
where $\ell$ is a number of $q$'s, and $i<j$ implies $\min \mathcal{A}_i<\min \mathcal{A}_j$.

For each $1\leqslant v \leqslant \ell$ we let
 $$\mathcal{A}_\nu = \left\{\alpha_{1}<\alpha_{2}<\cdots<\alpha_{r}\right\},\ \ r = \# \mathcal{A}_\nu.$$
We also set
$$P_{\mathcal{A}_\nu} = \sum_{ p_{\alpha_{2} } = p_{\alpha_{1}}}\sum\limits_{\substack{p_{\alpha_{ 3}} \in \left\{p_{\alpha_{1}}, p_{\alpha_{ 2}}\right\}}}\cdots \sum\limits_{\substack{p_{\alpha_{ r}} \in \left\{p_{\alpha_{ 1}}, p_{\alpha_{2}}, \ldots, p_{\alpha_{r-1}}\right\}}}(-1)^{r-1}.$$

Then expanding  the parentheses, we get
$$F = \sum_{\ell = 1}^k \sum\limits_{\substack{\mathcal{A}_1 \sqcup  \cdots \sqcup \mathcal{A}_\ell = [k]\\ 
i < j \Rightarrow \min A_i < \min A_j}} \sum\limits_{\substack{q_1,\ldots, q_\ell \in \mathcal{P} \\
p_j = q_\nu \forall j \in \mathcal{A}_\nu}} 
P_{\mathcal{A}_1} P_{\mathcal{A}_2}\cdots P_{\mathcal{A}_\ell}\cdot \mathcal{F}(p_1, \ldots, p_k).
$$

Since 
$$P_{{\mathcal{A}_\nu}} = (-1)^{\#\mathcal{A}_\nu - 1}(\#\mathcal{A}_\nu -1 )!$$
and
$$(\#\mathcal{A}_1-1)+\cdots+(\#\mathcal{A}_\ell-1) = k-\ell,$$ the claim follows.
\end{proof}

If we apply the above theorem to a tree zeta function we get an expression in terms of prime zeta functions. This will allow us to get an analytic continuation of the tree zeta function in a domain larger that $\sigma > 1$.
\begin{prop} 
Let $T=e^{T_1} \circ e^{T_2} \circ \cdots \circ e^{T_k}$, where ${\mathfrak M}(T_i) \in {\mathbb N}$ for all $i$. 
Then for $\sigma = \Re s > 1$ we have
\begin{equation}\label{treezeta}
\zeta_T(s) = \sum_{\ell=1}^k (-1)^{k - \ell} 
\sum\limits_{\substack{A_1 \sqcup \cdots \sqcup A_\ell = [k]\\ 
i < j \Rightarrow \min A_i < \min A_j}}c_{\mathcal{A}_1, \ldots, \mathcal{A}_{\ell}}
\sum\limits_{\substack{v_1,\ldots,v_k\\ 
t(v_i) = T_i \ (1 \leq i \leq k)}}
P\Big(s \sum_{\alpha_1 \in A_1}v_{\alpha_1}\Big) \cdots
P\Big(s \sum_{\alpha_\ell \in A_\ell}v_{\alpha_\ell}\Big),
\end{equation}
where
$$c_{\mathcal{A}_1, \ldots, \mathcal{A}_{\ell}} = (\#\mathcal{A}_1-1)!\cdots (\#\mathcal{A}_\ell-1)!.$$
\end{prop}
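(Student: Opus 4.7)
The plan is to apply the partition-based inclusion-exclusion identity \eqref{sumformula} with $\mathcal{P}$ the set of primes and the test function
\[\mathcal{F}(p_1, \ldots, p_k) = \sum_{\substack{v_1, \ldots, v_k \\ t(v_i) = T_i}} \prod_{i=1}^{k} p_i^{-v_i s}.\]
The first thing to verify is the absolute-convergence hypothesis of that theorem. By the preceding lemma, $\sum_{v:\,t(v)=T_i} v^{-\sigma}$ converges for $\sigma > 1/\mathfrak{M}(T_i)$, hence certainly for $\sigma = \Re s > 1$; combined with the finiteness of $P(\sigma) = \sum_p p^{-\sigma}$ in that range, this yields $\sum_{p_1, \ldots, p_k} |\mathcal{F}(p_1, \ldots, p_k)| < +\infty$, so \eqref{sumformula} applies.

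Next I would identify the left-hand side of \eqref{sumformula} with $\zeta_T(s)$. The crucial observation is that an integer $n$ satisfies $t(n) = T = e^{T_1} \circ \cdots \circ e^{T_k}$ precisely when $n$ has exactly $k$ distinct prime divisors $p_1, \ldots, p_k$ and the exponents $v_i = \nu_{p_i}(n)$ can be matched to the subtrees so that $t(v_i) = T_i$ for each $i$. This sets up a correspondence between integers $n$ with $t(n) = T$ and $k$-tuples $(p_1, \ldots, p_k; v_1, \ldots, v_k)$ (with pairwise distinct $p_i$ and $t(v_i) = T_i$), yielding $\sum_{p_1, \ldots, p_k\text{ distinct}} \mathcal{F}(p_1, \ldots, p_k) = \zeta_T(s)$.

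Then I would unpack the right-hand side of \eqref{sumformula}. Under the substitution $p_j = q_v$ for each $j \in A_v$, the monomial $\prod_i p_i^{-v_i s}$ becomes $\prod_{v=1}^\ell q_v^{-s \sum_{j \in A_v} v_j}$. Since the $q_v$'s in \eqref{sumformula} range freely (no distinctness imposed), summing over $(q_1, \ldots, q_\ell) \in \mathcal{P}^\ell$ factorizes as $\prod_{v=1}^\ell P\bigl(s \sum_{j \in A_v} v_j\bigr)$. Interchanging this $q$-summation with the outer sum over the $v_i$'s, justified by the absolute convergence established above, produces the claimed formula \eqref{treezeta} term by term.

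The main delicate point I anticipate is the bijection in the second step: when several of the $T_i$'s coincide one needs a coherent convention (for example, ordering the subtrees using the total order induced by $\mathfrak{M}$) for labeling the $k$ distinct prime divisors of $n$ by the indices $1, \ldots, k$, and one must confirm that no over- or undercounting is introduced. Once this bookkeeping is pinned down, the rest of the argument is just a direct substitution into the identity \eqref{sumformula} and a factorization of the resulting inner sums into prime zeta values.
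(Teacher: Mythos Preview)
Your plan coincides with the paper's own argument: write $\zeta_T(s)$ as an ordered sum over $k$-tuples of distinct primes with exponents $v_i$ satisfying $t(v_i)=T_i$, apply \eqref{sumformula}, and then interchange the $q$-sums with the $v$-sums to factor out the prime zeta values. The paper does this in three lines without discussing convergence or the correspondence with $\zeta_T$; your added justification of absolute convergence is correct.

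The concern you flag about repeated $T_i$'s, however, is not mere bookkeeping---it is a genuine gap, and the paper shares it. When some of the $T_i$ coincide, the identification $\zeta_T(s)=\sum_{p_1,\dots,p_k\ \text{distinct}}\mathcal{F}(p_1,\dots,p_k)$ fails: each $n$ with $t(n)=T$ is represented once for every permutation $\sigma\in S_k$ with $T_{\sigma(i)}=T_i$ for all $i$, so the ordered sum equals $N\cdot\zeta_T(s)$ with $N=\#\{\sigma\in S_k:T_{\sigma(i)}=T_i\ \forall i\}=\prod_j m_j!$ (the $m_j$ being the multiplicities of the distinct subtrees). Concretely, for $k=2$ and $T_1=T_2=r$ the right-hand side of \eqref{treezeta} equals $P(s)^2-P(2s)=2\sum_{p<q}(pq)^{-s}=2\,\zeta_T(s)$, not $\zeta_T(s)$. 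No labelling convention on the $n$-side repairs the stated formula, because the right-hand side of \eqref{treezeta} is itself an unrestricted sum over ordered data $(v_1,\dots,v_k)$ and partitions of $[k]$ and carries the same overcount $N$. Thus the identity as written holds only when the $T_i$ are pairwise distinct; in general one must divide the right-hand side by $N$ (equivalently, read the left-hand side as the ordered-tuple sum rather than $\zeta_T$).
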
	
\begin{proof} We have
$$\zeta_T(s) = \sum\limits_{\substack{p_1, \ldots, p_k  \\ i \neq j \Rightarrow p_i \neq p_j }}\sum\limits_{\substack{v_1, \ldots, v_k \\ t(v_i) = T_i}}\left(p_1^{v_1}\cdots p_k^{v_k}\right)^{-s}.$$
Using formula \eqref{sumformula}, we get
$$\zeta_T(s) = \sum_{\ell = 1}^k (-1)^{k-\ell} \sum\limits_{\substack{A_1 \sqcup \cdots \sqcup A_\ell = [k]\\ 
i < j \Rightarrow \min A_i < \min A_j}}c_{\mathcal{A}_1, \ldots, \mathcal{A}_\ell}\sum_{q_1, \ldots, q_\ell 
}\sum\limits_{\substack{v_1, \ldots, v_k \\ t(v_i) = T_i}}\left(q_1^{-s\sum_{\alpha_1\in \mathcal{A}_1}v_{\alpha_1}}\right)\cdots \left(q_\ell^{-s\sum_{\alpha_\ell\in \mathcal{A}_\ell}v_{\alpha_\ell}}\right),$$
where $c_{\mathcal{A}_1, \ldots, \mathcal{A}_\ell}$ is defined in the assumption.
Changing the order of summation gives the result.

\end{proof}

The identity \ref{treezeta} can be analytically continued to a suitable subset of the region $\sigma >0$ that does not contain the point $s=1/m$, where
$m = \min \{\mathfrak{M}(T_i): 1 \leq i \leq k\}$.

\medskip
Using the previous proposition, one can prove the following result.
\begin{theorem}\label{main2}
Let $T = \underbrace{ {e^{T_0}\circ e^{T_0}\circ \cdots \circ e^{T_0}}}_{k\ \textup{times}}\circ\,e^{T_{k+1}} \circ \cdots \circ e^{T_K}$,
with $m:= {\mathfrak M}(T_0) < {\mathfrak M}(T_i)$, for all $k < i \leq K$.
Then, for $s \to 1/m$, $\Re s> 1/m$, we have
\begin{equation}
\zeta_T(s) \sim \zeta_{T'}\Big(\frac 1 m\Big) \Big(\log\Big(\dfrac{1}{s-1/m}\Big)\Big)^k
\end{equation}
where $T' = e^{T_{k+1}} \circ \cdots \circ e^{T_K}$.
\end{theorem}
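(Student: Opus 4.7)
The plan is to apply the partition-based identity \eqref{treezeta} to $\zeta_T(s)$, viewing $T$ as a product of $K$ factors $e^{T'_i}$ where $T'_i = T_0$ for $1\le i\le k$ and $T'_i=T_i$ for $k<i\le K$, and then to track which set partitions of $[K]$ contribute to the leading singular behaviour as $s\to 1/m^+$. The identity expresses
\[
\zeta_T(s)=\sum_{\pi}(-1)^{K-|\pi|}\,c_\pi\,\prod_{B\in\pi}\phi_B(s),\qquad
\phi_B(s):=\sum_{\substack{v_\alpha:\,t(v_\alpha)=T'_\alpha\\ \alpha\in B}}P\Big(s\sum_{\alpha\in B}v_\alpha\Big),
\]
with $c_\pi=\prod_{B\in\pi}(|B|-1)!$; the factorisation over blocks is legitimate because every $v_i$ lies in exactly one block of $\pi$.

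The central analytic step is a three-way classification of the $\phi_B$'s as $s\to 1/m^+$. If $B=\{i\}$ with $i\le k$, then $\phi_B(s)=\zeta_{e^{T_0}}(s)=P(sm)+\sum_{v>m,\,t(v)=T_0}P(sv)$; since $P(\sigma)\sim\log\bigl(1/(\sigma-1)\bigr)$ as $\sigma\to 1^+$ while the tail converges at $s=1/m$ (using the same Euler-product bound already invoked in the convergence lemma of Section~2), one obtains $\phi_{\{i\}}(s)=\log\bigl(1/(s-1/m)\bigr)+O(1)$. If $B=\{i\}$ with $i>k$, then $\phi_B(s)=\zeta_{e^{T_i}}(s)$ converges continuously to the finite limit $\zeta_{e^{T_i}}(1/m)$, because $\mathfrak{M}(T_i)>m$ pushes the abscissa of absolute convergence strictly below $1/m$. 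For $|B|\ge 2$, a direct Euler-product bound gives
\[
\phi_B(s)\le\sum_p\prod_{\alpha\in B}\Big(\sum_{v:\,t(v)=T'_\alpha}p^{-sv}\Big)\ll\sum_p p^{-sM},\qquad M:=\sum_{\alpha\in B}\mathfrak{M}(T'_\alpha)\ge 2m,
\]
so $\phi_B$ stays uniformly bounded near $s=1/m$. In summary, logarithmic singularities arise \emph{only} from singleton blocks $\{i\}$ with $i\le k$.

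Consequently, the partitions producing the maximal $k$ logarithmic factors are exactly those in which $\{1\},\ldots,\{k\}$ all appear as singleton blocks, while the remaining indices $\{k+1,\ldots,K\}$ are partitioned arbitrarily, say as $\pi'$. Every other $\pi$ has at most $k-1$ singleton blocks in $[k]$ and hence contributes only $O\bigl((\log(1/(s-1/m)))^{k-1}\bigr)$. Summing the leading contributions over $\pi'$, the coefficient of $\bigl(\log(1/(s-1/m))\bigr)^k$ in $\zeta_T(s)$ becomes
\[
\sum_{\pi'}(-1)^{(K-k)-|\pi'|}\,c_{\pi'}\prod_{B\in\pi'}\phi_B(1/m),
\]
which is precisely the right-hand side of \eqref{treezeta} written for $T'=e^{T_{k+1}}\circ\cdots\circ e^{T_K}$ and evaluated at $s=1/m$; this sum equals $\zeta_{T'}(1/m)$, yielding the stated $\sim$-asymptotic.

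The main obstacle is obtaining uniform control of the non-singular $\phi_B$'s near the singularity: one needs $\phi_B(s)\to\phi_B(1/m)$ continuously (not merely pointwise), and one must check that partitions placing some index of $[k]$ inside a block of size $\ge 2$ genuinely lose at least one full logarithmic factor. Both points reduce to dominated-convergence arguments on Euler products of the type already used in the convergence lemma, so no essentially new analytic idea is required—only careful uniform bookkeeping.
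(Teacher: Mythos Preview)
Your proposal is correct and follows essentially the same route as the paper: both apply the partition identity \eqref{treezeta}, classify each block-factor $\phi_B$ as logarithmically singular (singletons $\{i\}$ with $i\le k$) or bounded (all other blocks), and then recognise the sum over the residual partitions of $\{k+1,\dots,K\}$ as $\zeta_{T'}$ evaluated near $1/m$. Your direct count of singleton blocks in $[k]$ is in fact slightly cleaner than the paper's version, which works with the min-ordered labelling and proves the equivalent combinatorial fact that any ``bad'' partition must have $\#\mathcal{A}_j\ge 2$ for some $j\le k$.
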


\begin{proof}

Taking the logarithm of the Euler product expansion for the Riemann zeta function, we obtain  
\[
\log \zeta(s) = -\sum_{p}\log\!\left(1-\frac{1}{p^s}\right), \qquad \Re s > 1,
\]  
where the principal branch of the logarithm is chosen. Expanding the logarithm into a Taylor series, we get  
\[
\log\zeta(s) = P(s) + Q(s), \qquad 
Q(s) = \sum_{p}\left(-\frac{1}{p^s}-\log\!\left(1-\frac{1}{p^s}\right)\right) 
= \sum_{p}\sum_{k\geqslant 2}\frac{1}{k\,p^{ks}}.
\]

The series defining $Q(s)$ converges absolutely and uniformly for $\Re s > \tfrac{1}{2}$. Hence $Q(s)$ is analytic in the region  
\[
\mathcal{G} = \{\, s\neq 1 : \Re s > \tfrac{1}{2}\,\}.
\]  
In particular, $Q(s)$ is bounded as $s \to 1$. Therefore, as $s \to 1$ with $\Re s > 1$,
\begin{equation}\label{logzeta}
\log\zeta(s) = P(s) + O(1).
\end{equation}

Finally, using the classical expansion  
\[
\zeta(s) = \frac{1}{s-1} + O(1), \quad s \to 1,
\]  
we deduce  
\begin{equation}\label{P(s)}
P(s) = \log \frac{1}{s-1} + O(1), \qquad s \to 1,\ \ \Re s>1.
\end{equation}

Consider now equality \eqref{treezeta} and denote by $Z_1$ the contribution coming from the partitions of $[K]$ of the form
\begin{equation}\label{cond1}
\mathcal{A}_i = \left\{i\right\}\  \text{for}\ \ 1\leqslant i\leqslant k,
\end{equation}
\begin{equation}\label{cond2}
\mathcal{A}_{k+1}\scalebox{1.5}{$\sqcup$}\cdots \scalebox{1.5}{$\sqcup$} \mathcal{A}_{\ell} = [K]\setminus[k],
\end{equation}
and let $Z_2 = \zeta_T(s)-Z_1.$ 

First, let us evaluate the sum $Z_1$.  If $K = k$, then $T' = t(1)$ and
$$Z_1 = \left(\log\dfrac{1}{s-\frac{1}{m}}\right)^k  = \left(\log\dfrac{1}{s-\frac{1}{m}}\right)^k\zeta_{T'}(s).$$ Suppose now that $K>k$. Then since the term corresponding to $\ell \leqslant k$ is equal to zero, we have
\begin{multline}
Z_1 = \sum_{\ell = k+1}^K (-1)^{K-\ell} \sum\limits_{\substack{A_{k+1} \sqcup \cdots \sqcup A_\ell = [K]\setminus [k]\\ 
i < j \Rightarrow \min A_i < \min A_j}}c_{\mathcal{A}_{1}, \ldots, \mathcal{A}_\ell}\\
\times\left(\sum_{t(v) = T_0}P(sv)\right)^k\sum\limits_{\substack{v_{k+1}, \ldots, v_K \\ t(v_i) = T_i}}P\left(s\sum_{\alpha_{k+1}\in \mathcal{A}_{k+1}}v_{\alpha_{k+1}}\right)\cdots P\left(s\sum_{\alpha_{\ell}\in \mathcal{A}_{\ell}}v_{\alpha_\ell}\right)\!. 
\end{multline}
Since $P(s)\ll 2^{-\sigma} $ for $\sigma > 1$, it follows from the formula for the sum of geometric progression, that
\begin{equation}\sum_{t(v) = T_0}P(sv) = P(sm) + O\left(\sum_{v\geqslant m+1} 2^{-\sigma v}\right) = P(sm) + O\left(\dfrac{1}{2^{\sigma(m+1)}-1}\right).
\end{equation}

Then, as $s\to 1/m$, $\Re s>1/m$, it follows from \eqref{P(s)} that
\begin{equation}\sum_{t(v) = T_0}P(sv) 
=\log\dfrac{1}{s-\frac{1}{m}}+O_m(1),
\end{equation}
and then, by the binomial theorem,
$$\left(\sum_{t(v) = T_0}P(sv)\right)^k  = \left(\log\dfrac{1}{s-\frac{1}{m}}\right)^k + O_{m, k}\left(\left|\log\dfrac{1}{s-\frac{1}{m}}\right|\right)^{k-1}.$$
Let us compute the contribution $Z_1^{(1)}$ of the term $\log(1/(s-1/m))$ to the sum $Z_1$. We have
\begin{multline}
Z_1^{(1) } = \left(\log\dfrac{1}{s-\frac{1}{m}}\right)^k \sum_{\ell = k+1}^K (-1)^{K-\ell} \sum\limits_{\substack{A_{k+1} \sqcup \cdots \sqcup A_\ell = [K]\setminus [k]\\ 
i < j \Rightarrow \min A_i < \min A_j}}c_{\mathcal{A}_{1}, \ldots, \mathcal{A}_\ell}\\
\times\sum\limits_{\substack{v_{k+1}, \ldots, v_K \\ t(v_i) = T_i}}P\left(s\sum_{\alpha_{k+1}\in \mathcal{A}_{k+1}}v_{\alpha_{k+1}}\right)\cdots P\left(s\sum_{\alpha_{\ell}\in \mathcal{A}_{\ell}}v_{\alpha_\ell}\right)\!.
\end{multline}
Consider the bijection $\psi: [K]\setminus[k]\to [K-k]$ defined by the rule
$\psi(a) = a-k$ for each $k<a\leqslant K.$
For $k+1 \leq \ell \leq K$ and a partition
$A_{k+1}, \ldots, A_\ell$ of $[K] \setminus [k]$,
let $\tilde{A}_{\nu-k}= \psi(A_\nu)$ for $k+1 \leq \nu\leqslant \ell$, 
so that $\tilde{A}_1,\ldots, \tilde{A}_{\ell'}$ is a partition of $[K - k]$,
$\ell' = \ell-k$. 
Moreover, if $i < j$ implies $\min A_i < \min A_j$, then it also implies $\min \psi(A_i) < \min \psi(A_j)$.
Also, set $\tilde{v}_{i-k} =v_i $ for $k<i\leqslant K$. 
Since the map $\psi$ sets up a bijection between partitions of the set $[K]\setminus [k]$ and partitions of the set $[K - k]$, and since $$c_{\mathcal{A}_1, \ldots, \mathcal{A}_\ell} = (\#\mathcal{A}_{k+1}-1)!\cdots (\#\mathcal{A}_{\ell}-1)! = (\#\tilde{\mathcal{A}}_{1}-1)!\cdots (\#\tilde{\mathcal{A}}_{\ell'}-1)! = c_{\tilde{\mathcal{A}}_1, \ldots, \tilde{\mathcal{A}}_{\ell'}},$$
it follows that
\begin{multline}
    Z_1^{(1) } = \left(\log\dfrac{1}{s-\frac{1}{m}}\right)^k \sum_{\ell' = 1}^{K-k} (-1)^{K-k-\ell'} \sum\limits_{\substack{\tilde{\mathcal{A}}_1 \sqcup \cdots \sqcup \tilde{\mathcal{A}}_{\ell'}  = [K-k]\\ 
i < j \Rightarrow \min \tilde{\mathcal{A}}_i  < \min \tilde{\mathcal{A}}_j }}c_{\tilde{\mathcal{A}}_1 , \ldots, \tilde{\mathcal{A}}_{\ell'} }\\
\times\sum\limits_{\substack{\tilde{v}_{1}, \ldots, \tilde{v}_{K-k} \\ t(\tilde{v}_i) = T_{i+k}}}P\left(s\sum_{\alpha_{1}\in \tilde{\mathcal{A}}_1 }\tilde{v}_{\alpha_{1}}\right)\cdots P\left(s\sum_{\alpha_{\ell}\in \tilde{\mathcal{A}}_{\ell'}}\tilde{v}_{\alpha_{\ell'}}\right)\!
= \left(\log\dfrac{1}{s-\frac{1}{m}}\right)^k\zeta_{T'}(s).
\end{multline}
Now we are going to estimate the contribution
$Z_1^{(2)}$ of the term $O(|\log(1/(sm-1))|^{k-1})$ to the sum $Z_1$. 
For a finite set $\mathcal{A}$ of positive integers, consider the sum
$$\mathcal{P}_{\mathcal{A}}(s) = \sum\limits_{\substack{v_\alpha, \alpha \in \mathcal{A} \\ t(v_\alpha) = T_{\alpha} \ (k<\alpha\leqslant K) \\ t(v_\alpha) = T_0\ (1\leqslant \alpha\leqslant k)}} P\left(s\sum_{\beta \in \mathcal{A}} v_\beta\right)\!.$$
We have
$\mathcal{P}_{\mathcal{A}}(s) = P_1+P_2,$
where
$$P_1 = P(sm)\mathbb{I}_{\left(\mathcal{A} = \left\{j\right\} \ \text{for some}\ 1\leqslant j \leqslant k \right)}$$
and
$$P_2 = \sum_{n\geqslant m+1} P(sn) q(n),\quad q( n) = \sum\limits_{\substack{\alpha \in \mathcal{A} \\ t(v_\alpha) = T_{\alpha} \ (k<\alpha\leqslant K) \\ t(v_\alpha) = T_0\ (1\leqslant \alpha\leqslant k) \\ \sum_{\beta \in\mathcal{A}}v_\beta = n}} 1.$$
Set $a = \#\mathcal{A}$. Then since
$$q(n)\leqslant \#\left\{(x_1, \ldots, x_{a}) \in \mathbb{N}^a: x_1+\cdots+x_{a} = n\right\} \leqslant n^{a},$$ it follows that
$$|P_2|\ll \sum_{n\geqslant m+1}2^{-\sigma n} n^a\ll_a \sum_{n\geqslant m+1}(\sqrt{2})^{-\sigma n}\ll_m 1,$$
as $s \to {1}/{m}$, $\Re s>1/m$. Hence, using the equality
$$\sum\limits_{\substack{v_{k+1}, \ldots, v_K \\ t(v_i) = T_i}}P\left(s\sum_{\alpha_{k+1}\in \mathcal{A}_{k+1}}v_{\alpha_{k+1}}\right)\cdots P\left(s\sum_{\alpha_{\ell}\in \mathcal{A}_{\ell}}v_{\alpha_\ell}\right) = \prod_{\nu = k+1}^\ell \mathcal{P}_{\mathcal{A}_\nu}(s),$$ 
and the fact that $\mathcal{A}_\nu \neq \left\{j\right\}$ for all $1\leqslant j\leqslant k$ and $k+1\leqslant \nu\leqslant \ell$,
we get
$$Z_1^{(2)}\ll_m \left|\log\dfrac{1}{s-\frac{1}{m}}\right|^{k-1}.$$
Thus,
$$Z_1 = \left(\log\dfrac{1}{s-\frac{1}{m}}\right)^{k}+O_{m, k}\left(\left|\log\dfrac{1}{s-\frac{1}{m}}\right|^{k-1}\right)\!$$
as $s\to {1}/{m}$, $\Re s>1/m$.

Finally, it remains to estimate the value $Z_2$. To do this, we note that a violation of conditions \eqref{cond1} and \eqref{cond2} implies that $\#\mathcal{A}_j\geqslant 2$ for some $1\leqslant j\leqslant k$. Indeed, assume the converse. Then for all $1\leqslant j\leqslant k$ we have $\#\mathcal{A}_j=1$ and there is $j_0\geqslant 2$ such that
$$j_0 = \min\left\{1<j\leqslant k: \mathcal{A}_j = \left\{i\right\}, i\neq j\right\}.$$
Since $\min \mathcal{A}_i\geqslant i$, it follows that
$$j_0 = \min\left\{1<j\leqslant k: \mathcal{A}_j = \left\{i\right\}, i> j\right\}.$$
Then on the one hand, 
$$j_0 \not{\in} 
  \mathcal{A}_1 \mathbin{\scalebox{1.4}{$\sqcup$}} 
  \mathcal{A}_2 \mathbin{\scalebox{1.4}{$\sqcup$}} \cdots \mathbin{\scalebox{1.4}{$\sqcup$}}\mathcal{A}_{j_0-1} = \left\{1, 2, \ldots, j_0-1\right\}.
$$
On the other hand, since
$$\min \mathcal{A}_\ell>\min\mathcal{A}_{\ell-1}>\cdots>\min\mathcal{A}_{j_0}>j_0,$$
it follows that 
$$j_0 \not{\in} 
  \mathcal{A}_{j_0} \mathbin{\scalebox{1.4}{$\sqcup$}} 
  \mathcal{A}_{j_0+1} \mathbin{\scalebox{1.4}{$\sqcup$}} \cdots \mathbin{\scalebox{1.4}{$\sqcup$}}\mathcal{A}_{\ell}$$
  and
  $$j_0\not{\in} \mathcal{A}_{1} \mathbin{\scalebox{1.4}{$\sqcup$}} 
  \mathcal{A}_{2} \mathbin{\scalebox{1.4}{$\sqcup$}} \cdots \mathbin{\scalebox{1.4}{$\sqcup$}}\mathcal{A}_{\ell} = [K]\supseteq[k].$$ This is a contradiction.

  Hence, for at least one $1\leqslant j\leqslant k$ we have $\mathcal{A}_j \neq \left\{i\right\}$ for all $1\leqslant i\leqslant k$. Therefore, we find that
  $$\prod_{\nu = 1}^\ell \mathcal{P}_{\mathcal{A}_\nu}(s)\ll_m \left|\prod_{j = 1}^k \mathcal{P}_{\mathcal{A}_j}(s)\right|\ll_m \left|\log\dfrac{1}{s-\frac{1}{m}}\right|^{k-1}.$$
  Thus, 
  $$Z_2 = \sum_{\ell=1}^K (-1)^{K
  - \ell} 
\sum\limits_{\substack{A_1 \sqcup \cdots \sqcup A_\ell = [K]\\ 
i < j \Rightarrow \min A_i < \min A_j \\ \exists j\ (1\leqslant j\leqslant k)\  \#\mathcal{A}_j\geqslant 2}}c_{\mathcal{A}_1, \ldots, \mathcal{A}_{\ell}}\prod_{\nu = 1}^\ell \mathcal{P}_{\mathcal{A}_\nu}(s)\ll_m \left|\log\dfrac{1}{s-\frac{1}{m}}\right|^{k-1}$$
and
$$\zeta_T(s) = \left(\log\dfrac{1}{s-\frac{1}{m}}\right)^{k}\zeta_{T'}(s)+O_m\left(\left|\log\dfrac{1}{s-\frac{1}{m}}\right|^{k-1}\right)\sim \left(\log\dfrac{1}{s-\frac{1}{m}}\right)^{k}\zeta_{T'}\left(\dfrac{1}{m}\right),$$
as $s \to {1}/{m}$, $\Re s>1/m$. The proof of the theorem is complete.
\end{proof}

It follows from the relation 
$$P(s) = \sum_{n=1}^{+\infty}\dfrac{\mu(n)}{n}\log \zeta(sn),$$
where $\mu(n)$ denotes the M\"obius function, and $\sigma >0, s\neq 1/n, s\neq \rho/n$ for each $n\geqslant 1$ and each non-trivial zero $\rho$ of $\zeta$ (see \cite[Chapter 1, \S 6]{Tit},
identity 1.6.1) that
$P(s)$ has essential singularities at the points $s = 1/n$ and $s = \rho/n$ for $n\geq 1$. It seems interesting to describe all the singular points of $\zeta_T(s)$ and find the asymptotic behavior of $\zeta_T(s)$ near them.

\end{document}